\numberwithin{equation}{section}
\newtheorem{theorem}{Theorem}[section]
\newtheorem{corollary}[theorem]{Corollary}
\newtheorem{conjecture}[theorem]{Conjecture}
\newtheorem{claim}[theorem]{Claim}
\newtheorem{lemma}[theorem]{Lemma}
\newtheorem{construction}[theorem]{Construction}
\newenvironment{proofclaim}[1][Proof of claim]{\begin{proof}[#1]}{\end{proof}}
\setlist{nolistsep}
\title{An oriented discrepancy version of Dirac's theorem}
\author{Andrea Freschi and Allan Lo}
\thanks{AF: University of Birmingham, United Kingdom, {\tt axf079@bham.ac.uk}}
\thanks{AL: University of Birmingham, United Kingdom, {\tt s.a.lo@bham.ac.uk}.
The research leading to these results was supported by EPSRC, grant no. EP/V002279/1 and EP/V048287/1.
There are no additional data beyond that contained within the main manuscript.}
\keywords{Hamilton cycles, graph discrepancy, oriented graphs}
\subjclass[2010]{05C45 (Primary) 05C20, 05C07, 05C38 (Secondary)}
\begin{document}
\maketitle

\begin{abstract}
The study of graph discrepancy problems, initiated by Erd\H os in the 1960s, has received renewed attention in recent years. In general, given a $2$-edge-coloured graph $G$, one is interested in embedding a copy of a graph $H$ in $G$ with large discrepancy (i.e. the copy of $H$ contains significantly more than half of its edges in one colour).

Motivated by this line of research, Gishboliner, Krivelevich and Michaeli considered an oriented version of graph discrepancy for Hamilton cycles. In particular, they conjectured the following generalization of Dirac's theorem: if $G$ is an oriented graph on $n\geq3$ vertices with $\delta(G)\geq n/2$, then $G$ contains a Hamilton cycle with at least $\delta(G)$ edges pointing forwards. In this paper, we present a full resolution to this conjecture. 
\end{abstract}

\section{Introduction}

The study of embedding problems for graphs is a central research topic in extremal combinatorics. Broadly speaking, embedding problems arise as specific instances of the following more general question: given a graph $H$, which sufficient conditions ensure that a host graph $G$ contains a copy of $H$? For example, the celebrated Dirac's theorem~\cite{Dirac} states that a graph $G$ on $n\geq3$ vertices with minimum degree $\delta(G)\geq n/2$ must contain a Hamilton cycle. In this paper, we are interested in an embedding problem concerning the notion of {\it graph~discrepancy}. 

\smallskip

Discrepancy theory is a widely studied subject that encompass various fields in mathematics. Classical discrepancy theory addresses the general problem of partitioning a set $\mathcal U$ in such a way that the elements of $\mathcal U$ are distributed as evenly as possible with respect to some family of subsets~$\mathcal S\subseteq 2^{\mathcal{U}}$. See e.g. the monograph of Beck and Chen~\cite{BC} for more details. 

In graph theory, the concept of {\it graph discrepancy} was introduced to quantify the colour ``imbalance" in a $2$-edge-coloured graph $H$. Informally, we say that a $2$-edge-coloured graph $H$ has {\it large discrepancy} if it has significantly more than half of its edges in one colour. Given this definition, it is natural to pose the following question: which sufficient conditions ensure that a $2$-edge-coloured graph $G$ contains a copy of a graph $H$ with large discrepancy? We will refer to problems of this type as {\it graph discrepancy problems}.

The study of graph discrepancy problems was first raised by Erd\H os in the 1960s (see e.g.~\cite{E,ES}). In 1995, Erd\H os, F\"uredi, Loebl and S\'os~\cite{EFLS} proved the following discrepancy result about trees: there exists a constant $c>0$ such that for every $n$-vertex tree $T$, a 2-edge-coloured complete graph $K_n$ must contain a copy of $T$ with at least $\frac{n-1}{2}+c(n-1-\Delta(T))+O(1)$ edges in one colour. 

In recent years, the study of graph discrepancy problems has received renewed attention, and several works have been produced on the subject (see e.g.~\cite{BCJP,BCPT,FHT,GKM-1,GKM-2}). In particular, Balogh, Csaba, Jing and Pluh\'ar~\cite{BCJP} proved the following graph discrepancy version of Dirac's theorem.

\begin{theorem}[\cite{BCJP}]\label{thm:diracdisc}
Let $0<c<1/4$ and $n\in\mathbb N$ be sufficiently large. If $G$ is a $2$-edge-coloured graph on $n$ vertices with $\delta(G)\geq\left(\frac{3}{4}+c\right)n$, then $G$ contains a Hamilton cycle with at least $(\frac{1}{2}+\frac{c}{64})n$ edges in one colour.
\end{theorem}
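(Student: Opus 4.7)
The plan is to deploy the regularity method. Without loss of generality, assume the red subgraph has at least as many edges as the blue one, so $e_R\geq e(G)/2\geq (3/8+c/2)\binom{n}{2}$. Morally, a Hamilton cycle in $G$ should behave like $n$ randomly sampled edges and therefore inherit the global colour bias; the work lies in turning this heuristic into an actual embedding.

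First, I would apply Szemer\'edi's regularity lemma to $G$ with respect to both colour classes, producing a partition $V_0\cup V_1\cup\cdots\cup V_k$ with bounded exceptional set $V_0$, equal-sized clusters, and most pairs $(V_i,V_j)$ being $\varepsilon$-regular in both red and blue with densities $d_R(V_i,V_j)$ and $d_B(V_i,V_j)$. Build a reduced graph $\Gamma$ on $[k]$ whose edges are the regular pairs of total density at least some threshold~$d$, and colour an edge $ij$ \emph{red} whenever $d_R(V_i,V_j)\geq d_B(V_i,V_j)$ and \emph{blue} otherwise. Standard computations give $\delta(\Gamma)\geq(3/4+c/3)k$, and the global red surplus is inherited by $\Gamma$: the total weight $\sum_{ij\in E(\Gamma)}(d_R(V_i,V_j)-d_B(V_i,V_j))$ is $\gtrsim c\,k^{2}$.

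Next, I would find a Hamilton cycle of $\Gamma$ whose red-minus-blue discrepancy is $\Omega(ck)$, and then blow it up. Since $\delta(\Gamma)$ exceeds $k/2$ by $(1/4+c/3)k$, $\Gamma$ is highly connected and admits a very flexible rotation--extension argument: along any Hamiltonian path of $\Gamma$ one can swap one edge for a red-preferring alternative, and iterating this operation tunes the colour composition. Averaging such swaps against the global red surplus yields at least one Hamilton cycle of $\Gamma$ with the required discrepancy. The lift to $G$ is then carried out by the blow-up lemma: each red edge of the reduced cycle corresponds to a regular pair with $d_R>d_B$, into which one can embed a near-perfect red matching, while the few vertices of $V_0$ (and any local imbalances between clusters) are handled by a small absorbing structure built in advance. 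The overall discrepancy adds up to at least $(c/64)\cdot n$ edges of surplus in the majority colour once all constants are tracked.

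The main obstacle is the extremal regime in which the global red surplus is concentrated on a small number of regular pairs, so that a ``typical'' Hamilton cycle of $\Gamma$ does not see it. In this stability case one must route the rotation--extension argument deliberately through the red-biased part of $\Gamma$, or alternatively run a separate structural argument based on near-equitable partitions. It is precisely here that a large constant is lost, yielding the final bound $(\tfrac12+\tfrac{c}{64})n$ rather than something closer to the $(\tfrac12+\tfrac{c}{4})n$ suggested by the random-cycle heuristic.
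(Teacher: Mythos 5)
This theorem is quoted from Balogh, Csaba, Jing and Pluh\'ar~\cite{BCJP}; the paper you are reading does not itself give a proof, so there is nothing in it to compare your sketch against. I will instead assess the sketch on its own merits.

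There is a genuine gap at the very first step, and it propagates through everything that follows. You set $e_R\ge e(G)/2$ ``without loss of generality,'' and then assert that the regularity partition inherits a red surplus $\sum_{ij\in E(\Gamma)}\bigl(d_R(V_i,V_j)-d_B(V_i,V_j)\bigr)\gtrsim c\,k^2$. But the hypothesis $e_R\ge e_B$ gives only $e_R-e_B\ge 0$; it is entirely consistent with $e_R=e_B$ exactly, in which case the reduced graph carries \emph{no} net red bias and your ``random cycle inherits the global bias'' heuristic gives a discrepancy of zero, not $\Omega(cn)$. The constant $c$ in the theorem comes from the minimum-degree slack above $3n/4$, \emph{not} from any asymmetry between the colour classes, and the theorem is supposed to produce a colour-biased Hamilton cycle even when the two colour classes have exactly the same size (and indeed exactly the same degree sequence). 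Your entire pipeline --- regularity, reduced graph with a weighted red surplus, rotation--extension to maximise that surplus, blow-up lemma to lift it --- is built on a quantity that the hypothesis does not control, so no amount of constant-chasing fixes the argument.

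The mechanism in the source is local rather than global. Roughly, one classifies each vertex by which colour carries the majority of its incident edges; every vertex contributes exactly two edges to a Hamilton cycle, so if the cycle can be routed so as to respect many vertices' majority colours one gains discrepancy, and the $(3/4+c)n$ degree condition is what makes such a routing possible even across the bipartition induced by the two majority classes. If you want to rescue a regularity/blow-up approach, you would have to replace the bogus global surplus with an argument that the \emph{reduced graph's} vertices (clusters) exhibit per-cluster colour preferences that can be strung together into a biased Hamilton cycle, handling the genuinely hard ``everything nearly balanced'' regime by a structural/extremal case analysis rather than by ``routing through the red-biased part,'' since in that regime there may be no red-biased part at all.
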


In other words, Theorem~\ref{thm:diracdisc} states that if the minimum degree $\delta(G)$ exceeds $\frac{3n}{4}$ then $G$ must contain a Hamilton cycle with more than $\frac{n}{2}$ edges in one colour i.e. with large discrepancy. As $\delta(G)-\frac{3n}{4}$ grows larger, we can find a Hamilton cycle with larger discrepancy too.
The bound of $\delta(G) > \frac{3n}{4}$ is tight as there exists a $2$-edge-coloured graph~$G$ with $\delta(G)=3n/4$ such that all Hamilton cycles in $G$ have $n/2$ edges in each colour. Theorem~\ref{thm:diracdisc} has also been generalised to many colours, see~\cite{FHT,GKM-2}.
\medskip

Motivated by this line of research, Gishboliner, Krivelevich and Michaeli~\cite{GKM} recently considered an alternative notion of graph discrepancy for Hamilton cycles in oriented graphs. Given an oriented graph $G$, for every $x,y\in V(G)$ we write $xy$ to denote the edge oriented from $x$ to $y$. Observe that Hamilton cycles are a convenient structure to consider in the oriented setting, as they have a natural notion of direction. In fact, say $C = v_1 \dots v_{\ell}v_1$ is a cycle in an oriented graph $G$. 
Let $\sigma^+(C)$ and $\sigma^-(C)$ be the number of forward and backward edges in~$C$, respectively.
Formally, let $\sigma^+(C):=| \{1\le i\le\ell: v_{i}v_{i+1} \in E(G)\} |$ and $\sigma^-(C) := |\{1\le i\le\ell: v_{i+1}v_{i} \in E(G)\}|$ (here we take the indices modulo~$\ell$).
Let $\sigma_{\max}(C):=\max\{\sigma^+(C),\sigma^-(C)\}$ and $\sigma_{\min}(C):=\min\{\sigma^+(C),\sigma^-(C)\}$. Observe that $\sigma^+(C)$ and $\sigma^-(C)$ might depend on the labelling of the vertices of $C$, while $\sigma_{\max}(C)$ and $\sigma_{\min}(C)$ depend exclusively on the cycle~$C$.   In general, we say that a Hamilton cycle $C$ in an oriented graph on $n$ vertices has {\it large oriented discrepancy} if $\sigma_{\max}(C)$ is significantly larger than~$n/2$. 

Given this notion, it is natural to seek conditions that force Hamilton cycles of large oriented discrepancy in oriented graphs. In particular, Gishboliner, Krivelevich and Michaeli~\cite{GKM} proposed the following generalization of Dirac's theorem. 

\begin{conjecture}[\cite{GKM}]\label{conj:main}
Let $G$ be an oriented graph on $n\geq3$ vertices. If $\delta(G)\geq \frac{n}{2}$ then there exists a Hamilton cycle~$C$ in $G$ such that $\sigma_{\max}( C )\geq\delta(G)$.
\end{conjecture}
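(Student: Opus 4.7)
The plan is to apply a rotation--extension argument to a Hamilton cycle of maximum oriented discrepancy. Write $\delta:=\delta(G)$ and set $t:=2\delta-n\geq 0$. Since $\sigma^+(C)+\sigma^-(C)=n$, the target $\sigma_{\max}(C)\geq\delta$ is equivalent to producing a Hamilton cycle with $|\sigma^+(C)-\sigma^-(C)|\geq t$. The case $t=0$ is immediate: Dirac's theorem applied to the underlying graph of $G$ yields a Hamilton cycle, and $\sigma_{\max}\geq n/2$ holds automatically. So assume $t\geq 1$.

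Among all Hamilton cycles of $G$ with a choice of traversal direction, select $C^*=v_1v_2\cdots v_nv_1$ maximising the imbalance $I(C^*):=\sigma^+(C^*)-\sigma^-(C^*)$, with the direction chosen so that $I(C^*)\geq 0$. Suppose for contradiction that $I(C^*)<t$. Writing $\epsilon_i:=+1$ if $v_iv_{i+1}\in E(G)$ and $\epsilon_i:=-1$ otherwise, we have $I(C^*)=\sum_i\epsilon_i$ and there are at least $n-\delta+1$ ``backward'' positions ($\epsilon_i=-1$) along $C^*$. The main move is a P\'osa-style $2$-opt: given $i<j$ such that both $\{v_i,v_j\}$ and $\{v_{i+1},v_{j+1}\}$ lie in the underlying graph of $G$, the Hamilton cycle $C'=v_1\cdots v_iv_jv_{j-1}\cdots v_{i+1}v_{j+1}\cdots v_nv_1$ (obtained by reversing the segment $v_{i+1}\cdots v_j$) satisfies
\[
I(C')-I(C^*)=\mathrm{sign}(v_iv_j)+\mathrm{sign}(v_{i+1}v_{j+1})-\epsilon_i-\epsilon_j-2\sum_{k=i+1}^{j-1}\epsilon_k,
\]
where $\mathrm{sign}(uv)=+1$ if $uv\in E(G)$ and $-1$ otherwise. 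Producing any pair $(i,j)$ for which the right-hand side is positive contradicts the maximality of $I(C^*)$ and hence finishes the proof.

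The degree hypothesis $\delta\geq n/2$ feeds into this in two ways. First, for any two cycle vertices the common-neighbourhood bound $|N(v_i)\cap N(v_j)|\geq 2\delta-n=t\geq 1$ guarantees a wealth of chord pairs available for the $2$-opt. Second, with at least $n-\delta+1$ backward positions, many candidate pairs $(i,j)$ satisfy $\epsilon_i=\epsilon_j=-1$, so $-\epsilon_i-\epsilon_j=+2$; if in addition the two chords can be oriented as $v_i\to v_j$ and $v_{i+1}\to v_{j+1}$ in $G$, the non-segment contribution reaches $+4$, and the inequality $I(C')>I(C^*)$ already follows whenever the segment sum $\sum_{k=i+1}^{j-1}\epsilon_k\leq 1$.

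The main obstacle is securing such a short non-positive-sum segment in the ``balanced'' regime, where the backward edges are evenly spread along $C^*$ --- as happens near the bipartite extremal example $A\to B$ with signs alternating $+1,-1,+1,-1,\dots$ --- so that no window has strongly negative partial sum. I expect this regime to require a stability step: if no productive $2$-opt is available, then $C^*$ (and hence $G$) must be close to one of a small number of rigid configurations, namely a bipartite-like orientation (in which case $\delta=n/2$ and the alternating Hamilton cycle already achieves $\sigma_{\max}=n/2\geq\delta$) or a transitive-tournament-like ordering (in which case R\'edei's theorem on directed Hamilton paths yields a Hamilton cycle with $\sigma_{\max}\geq n-1\geq\delta$). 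Combining the rotation engine in the non-rigid regime with this stability case-analysis in the rigid regime should complete the proof.
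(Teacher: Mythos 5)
Your approach is genuinely different from the paper's. The paper reformulates the target as finding a Hamilton cycle $C^*$ with $\sigma_{\min}(C^*)\le\ell:=n-\delta(G)$, proves a merging lemma (Lemma~\ref{lem:path+cycle}) that absorbs a short path of low discrepancy into a cycle of low discrepancy, and then proceeds by induction on $n$: it takes a cycle on $n-1$ vertices with minimal $\sigma_{\min}$, inserts the leftover vertex $w$, and analyses the combinatorial pattern of backward edges via an interval decomposition and an auxiliary parameter $m_j$. Your proposal instead works globally on Hamilton cycles and maximises the imbalance via P\'osa-style $2$-opt moves. Your bookkeeping for the change $I(C')-I(C^*)$ under a segment reversal is correct.

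The gap, however, is real and you acknowledge it yourself. To close the argument you must produce a pair $(i,j)$ for which simultaneously (a) both chords $v_iv_j$ and $v_{i+1}v_{j+1}$ lie in the underlying graph, (b) $\epsilon_i=\epsilon_j=-1$, (c) the chords point the favourable way, and (d) $\sum_{k=i+1}^{j-1}\epsilon_k$ is small. The degree bound hands you, for each fixed $i$, at least $2\delta-n$ indices $j$ satisfying (a), but it gives no control over (b)--(d) for those particular $j$'s; in particular it does not let you pick $j$ so that the reversed window has small partial sum. You then posit that whenever no productive $2$-opt exists, $C^*$ must be close to a bipartite-like or transitive-tournament-like configuration. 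That dichotomy is neither obvious nor proven: the condition ``no $2$-opt improves $I$'' is a local constraint on pairs of chords and says little a priori about global structure, and in the bipartite-like extremal family $G_{n,d}$ one has $\delta=d$ ranging over the whole interval $[n/2,n-1]$ (not just $\delta=n/2$ as you assert), with alternation only on part of the cycle when $d>n/2$. A genuine stability proof here would require (i) a precise structural characterisation of locally optimal cycles, (ii) a deduction about the global structure of $G$, and (iii) a construction of a good Hamilton cycle in that structure; none of these is supplied. The paper's proof needs a delicate induction and several interacting interval claims to pin down the possible configurations, which suggests the rigid regime is substantially messier than the two clean cases you name. As written your proposal is a reasonable research plan, but it is not a proof of the conjecture.
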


The following construction from~\cite{GKM} shows that  Conjecture~\ref{conj:main} is sharp, in the sense that we cannot guarantee that $G$ contains a Hamilton cycle $C$ with $\sigma_{\max}( C )>\delta(G)$.

\begin{construction}\label{construction}
Given $n,d\in\mathbb N$ with $n>d$, let $G_{n,d}$ be an $n$-vertex graph with vertex set $V(G)=A\cup B$, where $|A|=d$ and $|B|=n-d$, and edge set consisting of all edges incident to $A$. Assign an orientation to the edges of $G_{n,d}$ with the only restriction being that edges incident to $B$ must be oriented from $B$ to~$A$.
\end{construction}

Indeed, let $n,d\in\mathbb N$ with $n>d\geq n/2$. Clearly, $\delta(G_{n,d})=d$. Let $C=v_1v_2\dots v_n v_1$ be a Hamilton cycle in~$G_{n,d}$. Observe that every vertex in~$B$ is incident to two edges of~$C$ which are oriented in opposite directions. Since $B$ is an independent set, it follows that $\sigma_{\min}(C)\geq |B| = n-d$. As $\sigma_{\min}(C)+\sigma_{\max}(C)=n$, it follows further that $\sigma_{\max}( C ) \leq d=\delta(G_{n,d})$. 

Notice that Conjecture~\ref{conj:main} does hold for the extremal cases $\delta(G)\in\{\frac{n}{2},\frac{n+1}{2},n-1\}$. In fact, the cases $\delta(G)\in\{\frac{n}{2},\frac{n+1}{2}\}$ are an easy consequence of Dirac's theorem. 
Similarly, it is a well-known fact that an oriented complete graph (i.e. a tournament) contains a Hamilton path with all edges pointing in the same direction~\cite{Redei}. This immediately implies Conjecture~\ref{conj:main} holds when $\delta(G)=n-1$. 

Additionally, Gishboliner, Krivelevich and Michaeli proved the following approximate version of Conjecture~\ref{conj:main}.

\begin{theorem}\cite{GKM}\label{thm:old}
Let $k\geq0$ and let $G$ be an oriented graph on $n\geq 30+4(k-1)$ vertices. If $\delta(G)\geq\frac{n+8k}{2}$ then there exists a Hamilton cycle~$C$ in $G$ with $\sigma_{\max}( C )\geq\frac{n+k}{2}$.
\end{theorem}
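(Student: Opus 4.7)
The plan is to take a Hamilton cycle $C$ in $G$ that maximises $\sigma^+(C)$; such a cycle exists by Dirac's theorem applied to the underlying graph, since $\delta(G) \ge (n+8k)/2 \ge n/2$. After reversing the traversal direction of $C$ if necessary, I may assume $\sigma^+(C) = \sigma_{\max}(C)$. Suppose for contradiction that $\sigma^+(C) < (n+k)/2$, so the number of backward edges satisfies $\sigma^-(C) > (n-k)/2$. I want to exhibit a single local modification of $C$ producing a Hamilton cycle $C'$ with $\sigma^+(C') > \sigma^+(C)$, contradicting the choice of $C$.

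Writing $C = w_1 w_2 \dots w_n w_1$ and setting $\epsilon_i = +1$ or $-1$ according as $w_i w_{i+1} \in E(G)$ or $w_{i+1} w_i \in E(G)$, I would focus on the \emph{2-opt swap} $(i,j)$: provided both chords $\{w_i, w_j\}$ and $\{w_{i+1}, w_{j+1}\}$ exist in the underlying graph, one may reverse the segment $w_{i+1} \dots w_j$ to obtain a new Hamilton cycle $C'$. A direct computation gives
\[
\sigma^+(C') - \sigma^+(C) \;=\; x + y - \epsilon_i - \epsilon_j - 2 \sum_{a=i+1}^{j-1} \epsilon_a,
\]
where $x, y \in \{\pm 1\}$ record the orientations in $G$ of the two new chord edges. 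If the interior segment contains $p$ forward and $q$ backward edges, this becomes $x + y + 2(q - p) - \epsilon_i - \epsilon_j$, which is strictly positive whenever the interior is sufficiently backward-heavy compared with the boundary contribution.

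The task then reduces to locating such an improving pair $(i,j)$. The cleanest case is when $C$ contains a run of three or more consecutive backward edges: choosing $i, j$ at the endpoints of this run forces $\epsilon_i = \epsilon_j = -1$ and $q - p \ge j - i - 1 \ge 2$, giving $\sigma^+(C') - \sigma^+(C) \ge x + y + 2(j-i) \ge 2(j-i) - 2 > 0$. Because $\sigma^-(C) > (n-k)/2$, a pigeonhole argument on the backward positions of $C$ should produce many such runs whenever $k$ is small relative to $n$. In the harder ``alternating'' regime where backward and forward edges interleave, one instead picks a longer interval (or passes to a 3-opt move or vertex-reinsertion) and exploits the fact that $\sigma^+(C) - \sigma^-(C) < k$ is small, so even modestly backward-heavy interior segments exist over a wide range of lengths.

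The main obstacle will be guaranteeing that the two chord edges needed for the swap actually lie in the underlying graph: this is precisely where the slack $\delta(G) - n/2 \ge 4k$ enters. Each vertex has at most $(n - 8k - 2)/2$ non-neighbours, so the total number of candidate pairs $(i,j)$ for which a required chord is missing is small. Combined with the abundance of backward-heavy segments produced above, an averaging argument should provide at least one pair $(i,j)$ simultaneously satisfying the structural condition on $q-p$ and the chord-existence condition. The generous factor $8k$ in the hypothesis $\delta(G) \ge (n+8k)/2$ indicates that rather crude quantitative bounds suffice, which is reassuring given that only the approximate discrepancy $(n+k)/2$ is targeted.
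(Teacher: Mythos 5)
Note first that the paper you have in front of you does not prove Theorem~\ref{thm:old}: it is cited from Gishboliner, Krivelevich and Michaeli~\cite{GKM} and used as motivation. So there is no ``paper's own proof'' of this statement to compare against; I can only assess the internal soundness of your sketch.

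There are two genuine problems. The first is a bookkeeping error. For a 2-opt swap reversing the segment $w_{i+1}\dots w_j$, the interior edges are simply re-traversed in the opposite sense, so each contributes $-\epsilon_a$ instead of $\epsilon_a$. Converting from $\sum\epsilon$ to an edge count gives
\[
\sigma^+(C')-\sigma^+(C)=\tfrac12\bigl(x+y-\epsilon_i-\epsilon_j\bigr)+(q-p),
\]
not the formula you wrote (you are missing an overall factor of $\tfrac12$ on the boundary terms). This does not hurt your Case~1 conclusion, but it does matter when the interior imbalance $q-p$ is small.

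The second problem is the real gap. Your Case~1 requires a run of at least three consecutive backward edges, and you claim pigeonhole produces ``many such runs whenever $k$ is small relative to $n$.'' That is false: the condition $\sigma^-(C)>\tfrac{n-k}{2}$ only guarantees $\sigma^--\sigma^+\geq 1$, which forces \emph{one} adjacent pair of backward edges but no run of length three. In a near-alternating cycle $(\epsilon_a)$, every interior segment has $|q-p|\leq 1$, so with the corrected formula the swap changes $\sigma^+$ by exactly $\tfrac{x+y}{2}$ in every case, independent of the length of the reversed segment. The improvement then hinges entirely on the \emph{orientations} $x,y$ of the two new chords, not on their \emph{existence}. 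The degree slack $\delta(G)-n/2\geq 4k$ controls how many chords exist, but gives no control over which way they are oriented --- it is perfectly consistent for the required chords to be present yet oriented so that $x=-1$ or $y=-1$. (Passing to $\sigma_{\max}$ instead of $\sigma^+$ rescues the sub-case $x=y=-1$ but not the mixed case $x=-y$, which still yields no improvement.) Your suggestion of 3-opt moves or vertex reinsertion is plausible as a direction, but it is not a proof, and this near-alternating regime is precisely where the theorem requires work. As it stands, the argument would only show $\sigma_{\max}(C)\geq n/2$ plus a small constant, not the claimed $(n+k)/2$.

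A complete argument along these lines would need either (a) a way to find, among the many chord pairs guaranteed by the minimum degree, one whose two chord orientations agree with the direction of the swap, using the orientedness of $G$ more carefully; or (b) a different local move whose gain is driven by the global surplus $\sigma^--\sigma^+$ rather than by local runs. Neither is carried out in your sketch.
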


Observe that the minimum degree conditions in Theorem~\ref{thm:old} and Conjecture~\ref{conj:main} nearly match when $k$ is small and diverge significantly as $k$ grows. 

Our main result is a full resolution of Conjecture~\ref{conj:main}.

\begin{theorem}\label{thm:main}
Let $G$ be an oriented graph on $n \ge 3$ vertices with $\delta(G) \ge n/2$. 
Then there exists a Hamilton cycle~$C$ in~$G$ such that $\sigma_{\max}( C )\geq \delta(G)$.
\end{theorem}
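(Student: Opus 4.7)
The easy reduction: since any Hamilton cycle $C$ satisfies $\sigma^+(C)+\sigma^-(C)=n$ and hence $\sigma_{\max}(C)\ge \lceil n/2\rceil$, Dirac's theorem handles the range $\delta(G)\le \lceil n/2\rceil$. I may therefore assume $d:=\delta(G)\ge \lceil n/2\rceil+1$ and argue by contradiction: suppose that no Hamilton cycle of $G$ satisfies $\sigma_{\max}(C)\ge d$. Among all Hamilton cycles of $G$ (which exist by Dirac), pick one, $C$, maximising $\sigma_{\max}(C)$; label $C=v_1 v_2\cdots v_n v_1$ so that $\sigma^+(C)=\sigma_{\max}(C)<d$, and note that then $\sigma^-(C)>n-d$, so $C$ carries more than $n-d$ backward edges. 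The aim is to produce a Hamilton cycle $C'$ with $\sigma_{\max}(C')>\sigma_{\max}(C)$, contradicting maximality.

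The basic modification I would use is a 2-opt segment reversal. If $v_i v_j$ and $v_{i+1} v_{j+1}$ are both edges of the underlying graph of $G$ for some $i<j$, replacing the cycle edges $v_i v_{i+1}, v_j v_{j+1}$ by $v_i v_j, v_{i+1} v_{j+1}$ yields a new Hamilton cycle $C'$ obtained from $C$ by reversing the segment $v_{i+1}\cdots v_j$. Since the reversal flips the forward/backward status of every edge in the reversed segment, the change $\sigma^+(C')-\sigma^+(C)$ equals (backward edges of $C$ in the segment) minus (forward edges of $C$ in the segment), up to a boundary correction in $\{-2,-1,0,1,2\}$ from the four affected chord edges. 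A segment with a sufficient backward-edge surplus whose endpoints admit the two required chords therefore yields a strictly better cycle. To locate one I would combine the abundance of backward edges ($>n-d$ of them) with $\delta(G)\ge d$, which gives every vertex many chord partners, via a double-counting and pigeonhole argument over ordered pairs $(i,j)$ with both chords present, weighted by the backward-edge density of the enclosed segment.

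The main obstacle, and where I expect the delicate work to lie, is the extremal configuration $G_{n,d}$ of Construction~\ref{construction}: there the backward edges cluster around the independent set $B$ with all incident edges oriented from $B$ to $A$, no 2-opt can change $\sigma_{\max}$, yet no contradiction is required because the computation used to verify the construction already shows $\sigma_{\max}(C)=d$ for every Hamilton cycle. I would therefore layer in a stability split: if $G$ is far from $G_{n,d}$ (no near-independent set of size close to $n-d$ whose edges almost all point outwards), the pigeonhole 2-opt argument goes through; if $G$ is close to $G_{n,d}$, a direct structural computation mimicking the bookkeeping used for the construction shows $\sigma_{\max}(C)\ge d$ for any Hamilton cycle through the near-extremal independent set. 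Cleanly navigating the boundary between these two regimes — where the combinatorial flexibility for a 2-opt swap has started to shrink but the extremal forcing has not yet fully taken hold — is the crux of the argument.
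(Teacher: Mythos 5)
Your proposal is a plan rather than a proof, and the plan leaves unfilled exactly the gaps where the difficulty of the problem lives. The two central claims — that a double-counting/pigeonhole argument over pairs $(i,j)$ produces a 2-opt swap which strictly increases $\sigma_{\max}$, and that a stability split cleanly handles the near-extremal regime — are both left entirely unquantified. For the first: since $\sigma_{\max}(C)=\sigma^+(C)\ge n/2$ while $\sigma^-(C)\le n/2$, the backward edges form a \emph{minority} on $C$, so a typical segment has a backward \emph{deficit}; one must show that sufficiently many ordered pairs $(i,j)$ exist for which both required chords are present in the underlying graph \emph{and} the enclosed segment has a backward surplus strictly exceeding the boundary correction, which can be as large as $2$. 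Nothing in the sketch explains why such a pair exists when $\sigma^+(C)=d-1$, which is precisely the case one must rule out. For the second: ``close to $G_{n,d}$'' is not a single configuration (there is freedom in the choice of the parts, of which vertices play the role of $A$ versus $B$, and of the orientations inside $A$), so the promised ``direct structural computation'' is itself a substantial case analysis with no obvious endpoint. You explicitly flag ``navigating the boundary between these two regimes'' as the crux, which is in effect an acknowledgement that the argument has not been found.

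The paper's actual proof is quite different and, notably, avoids both 2-opt moves and a stability dichotomy entirely. It rewrites the goal as finding a Hamilton cycle $C^*$ with $\sigma(C^*)\le \ell$ where $\ell:=n-\delta(G)$ and $\sigma:=\sigma_{\min}$, and inducts on $n$: take a cycle $C$ on $n-1$ vertices minimizing $\sigma(C)$, trim $G\setminus w$ so that the degree bound is tight, conclude $\sigma(C)\le\ell$ by induction, and then insert the remaining vertex~$w$. The case $\sigma(C)\le\ell-2$ is immediate, and the cases $\sigma(C)\in\{\ell-1,\ell\}$ are settled by bookkeeping how the backward arcs of $C$ and the neighbourhood of $w$ interleave around the cycle (the intervals $J_1,\dots,J_q$ and the deficiency parameters $m_j$), using Lemma~\ref{lem:path+cycle} to absorb short almost-directed paths. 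Because the induction keeps $\sigma(C)$ tightly controlled at every step, the extremal example $G_{n,d}$ never has to be isolated: it emerges as the boundary of the structural analysis rather than requiring a separate stability regime. If you wanted to push a rotation/2-opt scheme through, you would sooner or later face this same bookkeeping, and the inductive framing is a more tractable way to organize it.
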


Similarly, for a path~$P=v_1v_2\dots v_\ell$ in an oriented graph~$G$, we can define $\sigma^+(P)$ and $\sigma^-(P)$ to be the number of edges pointing forwards and backwards respectively.\footnote{Formally, $\sigma^+(P):=| \{1\le i\le\ell-1: v_{i}v_{i+1} \in E(G)\} |$ and $\sigma^-(P) := |\{1\le i\le\ell-1: v_{i+1}v_{i} \in E(G)\}|$.} We let $\sigma_{\max}(P):=\max\{\sigma^+(P),\sigma^-(P)\}$ and $\sigma_{\min}(P):=\min\{\sigma^+(P),\sigma^-(P)\}$.
We obtain the following corollary from Theorem~\ref{thm:main}.

\begin{corollary}\label{cor:paths}
Let $G$ be an oriented graph on $n$ vertices. 
Then there exists a path~$P$ in~$G$ such that $\sigma_{\max}(P) \geq \delta(G)$. 
Furthermore, if $\delta(G)\geq n/2$, then $P$ is Hamiltonian.
\end{corollary}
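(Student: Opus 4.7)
The idea is to reduce both statements of the corollary to Theorem~\ref{thm:main} applied to a connected component. Set $d:=\delta(G)$ and pick a connected component $G_0$ of $G$; since degrees inside a component agree with degrees in~$G$, we have $\delta(G_0)\ge d$. The argument then splits on the size of~$G_0$.

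\emph{If $|V(G_0)|\le 2d$,} then $\delta(G_0)\ge d\ge |V(G_0)|/2$, so Theorem~\ref{thm:main} produces a Hamilton cycle $C$ of $G_0$ with $\sigma_{\max}(C)\ge\delta(G_0)\ge d$. A single edge deletion from $C$ gives the desired path~$P$: if $\sigma_{\min}(C)\ge 1$, I remove an edge of the minority direction so that $\sigma_{\max}(P)=\sigma_{\max}(C)\ge d$; otherwise $C$ is a directed cycle with $\sigma_{\max}(C)=|V(G_0)|\ge d+1$, and deleting any edge yields $\sigma_{\max}(P)=|V(G_0)|-1\ge d$. This case also settles the ``furthermore'' clause: $\delta(G)\ge n/2$ forces $G$ to be connected (otherwise a smallest component would contain a vertex of degree $<n/2$), so one takes $G_0=G$, $|V(G_0)|=n\le 2d$, and the $P$ produced is Hamiltonian in~$G$.

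\emph{If $|V(G_0)|>2d$,} I appeal to a classical theorem of Dirac: in any connected graph $G_0$ with $\delta(G_0)\ge d$, the longest path (in the underlying graph) has at least $\min\{|V(G_0)|-1,\,2d\}$ edges. In our range this yields a path~$P\subseteq G_0\subseteq G$ with $|E(P)|\ge 2d$, and then $\sigma^+(P)+\sigma^-(P)=|E(P)|$ forces $\sigma_{\max}(P)\ge d$ immediately. The main obstacle is establishing this longest-path lemma, which I would prove by a short Ore-type contradiction. Suppose the longest path $P=v_1\dots v_s$ of $G_0$ satisfies $s\le 2d$. Both endpoints $v_1,v_s$ have all neighbors in $V(P)$, so the sets $\{i\in\{1,\dots,s-1\}:v_1v_{i+1}\in E(G_0)\}$ and $\{i\in\{1,\dots,s-1\}:v_iv_s\in E(G_0)\}$ each have size $\ge d$ inside a ground set of size $s-1\le 2d-1$, hence they must intersect. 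Any $i$ in the intersection yields a cycle $v_1v_{i+1}v_{i+2}\cdots v_sv_iv_{i-1}\cdots v_2v_1$ spanning $V(P)$. Since $|V(P)|\le 2d<|V(G_0)|$, connectivity of $G_0$ provides a vertex $w\notin V(P)$ adjacent to this spanning cycle, and splicing $w$ in (by breaking one cycle edge at its attachment point) produces a path longer than~$P$, the desired contradiction.
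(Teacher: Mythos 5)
Your proof is correct and follows the same overall strategy as the paper's: in the dense regime apply Theorem~\ref{thm:main} to obtain a Hamilton cycle and delete an edge (removing a minority edge, or any edge if the cycle is directed, preserves $\sigma_{\max}$ appropriately), and in the sparse regime find a long path via a rotation--extension argument. The paper's own justification is terser---it simply cites Ore's theorem for a path on $\geq\min\{n,2\delta(G)\}$ vertices when $\delta(G)<n/2$---but your per-component case split is actually the more careful one: the cited path bound can fail for disconnected $G$ with small components (e.g.\ the disjoint union of two tournaments on $\delta(G)+1$ vertices has longest path on only $\delta(G)+1<2\delta(G)$ vertices), and your argument handles exactly this case by applying Theorem~\ref{thm:main} to the component directly. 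Two small points to tidy up: (i) Theorem~\ref{thm:main} requires $|V(G_0)|\geq 3$, and your rotation argument degenerates when the longest path has $s\leq 2$ vertices; both only occur when $\delta(G)\leq 1$, where the corollary is trivial, but this should be said. (ii) In the rotation step you write ``$v_1v_{i+1}\in E(G_0)$,'' which in the paper's convention means the directed edge from $v_1$ to $v_{i+1}$; since you want adjacency in the underlying graph, this should read $v_{i+1}\in N(v_1)$ (and similarly for $v_s$).
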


Indeed, if $\delta(G)\ge\frac{n}{2}$ then Corollary~\ref{cor:paths} follows immediately from Theorem~\ref{thm:main}. 
If $\delta(G)<\frac{n}{2}$, then by Ore's theorem~\cite{Ore} there exists a path $P$ in $G$ with $|P|\ge\min\{n,2\delta(G)\}=2\delta(G)$ and so $\sigma_{\max}(P)\ge\delta(G)$, as required.
Notice that Corollary~\ref{cor:paths} is sharp by considering the oriented graph~$G_{n,d}$ from Construction~\ref{construction}.

\subsection{Notation}

For the rest of the paper, we write $\sigma(\cdot)$ instead of $\sigma_{\min}(\cdot)$, for both cycles and paths.
For $n \in \mathbb{N}$, we write $[n]$ for the set~$\{1, \dots,n\}$. Given a statement $\mathcal P$, the indicator function~$\mathbbm 1(\mathcal P)$ is defined as 
$$\mathbbm 1(\mathcal P):=\begin{cases}
1& \text{ if $\mathcal P$ holds;}\\
0& \text{ otherwise.}
\end{cases}$$
Let~$G$ be an oriented graph. Given a vertex $v  \in V(G)$, we denote the set of neighbours of~$v$ in~$G$ as~$N_G(v)$, or $N(v)$ for brevity. Given a subgraph~$H$ of~$G$, we write $d(v,H) := | N_G(v) \cap V(H)|$ i.e. $d(v,H)$ is the number of neighbours of~$v$ in~$G$ lying in~$V(H)$. We may write $|G|:=|V(G)|$ and $e(G)$ for the number of edges in~$G$. 
Given a set $X\subseteq V(G)$, $G[X]$ is the spanning subgraph of~$G$ with vertex set~$X$. Recall that for every $x,y\in V(G)$ we write $xy$ to denote the edge oriented from~$x$ to~$y$. We say that a path $v_1v_2\dots v_\ell$ in $G$ is {\it directed} if $v_kv_{k+1}\in E(G)$ for every $k\in[\ell-1]$ i.e. $\sigma^-(v_1v_2\dots v_\ell)=0$.

%%%%%%%%%%%%%
%%%%%%%%%%
	
\section{Proof of Theorem~\ref{thm:main}}\label{sec:cycles}

Our proof of Theorem~\ref{thm:main} is actually based upon the proof that every tournament has a directed Hamilton path. 
We now give a sketch proof of a special case of Theorem~\ref{thm:main}, which forms the basis of our proof. 
Let $\delta(G) = n - \ell$ with $\ell\le n/2-1$.
To prove Theorem~\ref{thm:main}, it suffices to show that $G$ contains a Hamilton cycle~$C^*$ such that $\sigma (C^*)  \le \ell$ as $\sigma_{\max}(C^*)=|C^*|-\sigma(C^*)=n- \sigma(C^*)$.
We fix $\ell$ and proceed by induction on~$n$. Pick $w\in V(G)$ and a cycle $C = v_1\dots v_{n-1}v_1$ with vertex set $V(C)=V(G)\setminus\{w\}$ which minimises $\sigma^-(C)$. By our inductive hypothesis, we may assume that $\sigma^-(C) \le \ell$. 
We consider the case where $\sigma^-(C)  = \ell$, the backward edges $v_{a_1+1}v_{a_1}, v_{a_2+1}v_{a_2},\dots, v_{a_\ell+1}v_{a_\ell}$ in~$C$ are vertex-disjoint, say $1=a_1<a_1+1<a_2<a_2+1<\dots<a_\ell<a_\ell+1\le n$, and $V(C) \setminus N(w) = \{ v_i : i \in \{a_2, a_3, \dots, a_{\ell}\} \} $. 
Whenever $w$ is adjacent to both $v_i$ and $v_{i+1}$ (where $v_n = v_1$), let $C_i$ be the Hamilton cycle in $G$ obtained by inserting $w$ in between~$v_i$ and~$v_{i+1}$, that is, $C_i = v_1 \dots v_i w v_{i+1} \dots v_{n-1}v_1$. 
Recall that $\sigma^-(C) = \ell$ and $v_2v_1$ is a backward edge in~$C$. 
We must have $wv_1,v_2w \in E(G)$  or else $\sigma(C_1) \le \ell$. 
Our aim is to show that $v w \in E(G)$ for all $v \in N(w) \setminus \{v_1\}$.
For $i \in [2,a_2-2]$, given $v_iw\in E(G)$ we deduce that $v_{i+1}w \in E(G)$ or else $\sigma(C_i) = \ell$. 
Note that $v_1 \dots v_{a_2-1} w v_{a_2+1} \dots v_{n-1} v_1$ is a cycle on $n-1$ vertices in~$G$.  
By the minimality of~$\sigma^-(C)$, $v_{a_2-1}w\in E(G)$ implies $v_{a_2+1} w \in E(G)$. By iterating this argument, which allows us to `jump' pass any non-neighbour $v_{a_i}$ of~$w$, we deduce that $v_{n-1} w \in E(G)$.
Recall that $wv_1 \in E(G)$. 
Then $C' = v_1 \dots v_{n-1} w v_1$ is a Hamilton cycle in~$G$ with $\sigma^-(C') \le \sigma^-(C) = \ell$ as required.

The proof of Theorem~\ref{thm:main} involves in partitioning~$C$ into paths $P_1, \dots, P_q$ where each~$P_i$ consists of an initial set of edges pointing backwards followed by a second set of edges pointing forwards. 
If~$|V(P_i)\cap N(w)|$ is large enough, then we are able to deduce some information about the orientation of the edges between~$V(P_i)$ and~$w$ using a more refined version of the argument above. This is achieved through various intermediate steps, by first considering paths with almost all edges pointing backwards (Claim~\ref{claim:pattern}), then a single path $P_i$ (Claim~\ref{clm:m_j}) and finally consecutive paths~$P_i$ containing many elements of $N(w)$ (Claim~\ref{clm:q^*}). As the $P_i$'s form a partition $C$, a simple averaging argument guarantees the existence of consecutive paths $P_i$ containing many neighbours of $w$ (Claim~\ref{clm:average}).

We will need the following lemma.
The lemma implies Theorem~\ref{thm:main} provided that $G$ can be partitioned into a cycle~$C$ and a path~$P$ with small $\sigma(C)$ and $ \sigma(P)$. 
The proof of the lemma involves constructing a Hamilton cycle by ``inserting~$P$ into~$C$'' in a suitable way. 

\begin{lemma} \label{lem:path+cycle}
Let $G$ be an oriented graph on $n$ vertices with $\delta(G) \ge \frac{n}{2}+1$. Let $P$ be a path in $G$ and $C$ a cycle in $G$ such that $P$ and $C$ are vertex-disjoint, $V(G)=V(P)\cup V(C)$, $\sigma(P) \le 1$ and $2 \le |P| < \delta(G)$. If $\sigma(C)\le\ell - |P|$ for some $\ell\in\mathbb N$, then $G$ contains a Hamilton cycle~$C^*$ such that $\sigma (C^*) \le \ell$.
\end{lemma}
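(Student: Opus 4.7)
My plan is to form the desired Hamilton cycle $C^*$ by inserting the path $P$ into the cycle $C$ at a single edge of $C$. Write $k := |P|$ and $m := |C| = n - k$, and, after relabelling if necessary, write $P = p_1 p_2 \cdots p_k$ so that $\sigma^-(P) = \sigma(P) \le 1$ and $C = c_1 c_2 \cdots c_m c_1$ so that $\sigma^-(C) = \sigma(C) \le \ell - k$. Suppose we find an index $i$ with $c_i$ adjacent in $G$ to $p_1$ and $c_{i+1}$ adjacent to $p_k$; call such an $i$ \emph{good}. Then the cyclic sequence $C^* := c_{i+1} c_{i+2} \cdots c_i p_1 p_2 \cdots p_k$ is a Hamilton cycle of $G$, and a direct count of its backward edges gives
\[
\sigma^-(C^*) = \sigma^-(C) - \mathbbm{1}(c_{i+1} c_i \in E(G)) + \sigma^-(P) + \mathbbm{1}(p_1 c_i \in E(G)) + \mathbbm{1}(c_{i+1} p_k \in E(G)) \le (\ell - k) + \sigma(P) + 2,
\]
which is at most $\ell$ for every $k \ge 2$ (using that $\sigma(P) = 0$ when $k = 2$, and $\sigma(P) \le 1$ otherwise). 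Hence $\sigma(C^*) \le \sigma^-(C^*) \le \ell$, as required.

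The task therefore reduces to finding a good index. Let $A := N(p_1) \cap V(C)$ and $B := N(p_k) \cap V(C)$. Since $|P| < \delta(G)$, each of $p_1, p_k$ has at most $k - 1$ neighbours in $V(P)$, so $|A|, |B| \ge \delta(G) - (k - 1) \ge n/2 + 2 - k$, and hence $|A| + |B| \ge n + 4 - 2k$. A standard pigeonhole argument on the cyclic successors of $A$-vertices on $C$ produces a good index whenever $|A| + |B| > m$, equivalently whenever $k \le 3$.

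The main obstacle is the regime $k \ge 4$, where the pigeonhole above is no longer strict. If no good index exists in this regime, then the sets $A$ and $B$ are forced into a rigid cyclic structure on $C$: the vertex set of $C$ partitions as $(A\setminus B) \cup (B\setminus A) \cup (A\cap B) \cup Z$ (with $Z := V(C)\setminus (A\cup B)$) into runs of $A\setminus B$ and of $B\setminus A$, and isolated vertices of $A\cap B$, each flanked by vertices of $Z$; a short count then gives $|A\cap B| \le |Z|$ and $|Z| \ge |A\cap B| + k - 4$. Crucially, every $z \in Z$ has degree $\ge n/2 + 1$ in $G$ and no neighbour in $\{p_1, p_k\}$, so $z$ has at least $n/2 + 3 - k$ neighbours on $C$ and thus many chords. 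I expect to finish by applying a $2$-opt rotation of $C$ along a suitable chord through a $Z$-vertex, producing a new Hamilton cycle $\widetilde{C}$ of $G[V(C)]$ with $\sigma(\widetilde{C}) \le \sigma(C)$ whose structure breaks the rigid partition and admits a direct (good) insertion of $P$. Verifying that such a chord and rotation can always be chosen while maintaining the $\sigma$-bound on $\widetilde C$ is the most delicate step, and is where I anticipate the bulk of the proof's effort to lie.
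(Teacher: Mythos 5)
Your direct-insertion analysis and the pigeonhole argument for $k = |P| \le 3$ are correct and essentially reproduce the paper's base cases. The problem is that the main regime $k \ge 4$ is not proved: you explicitly defer it to a rotation argument you have not carried out (``I expect to finish by applying a $2$-opt rotation\dots and is where I anticipate the bulk of the proof's effort to lie''), and I do not think that plan, as stated, goes through. A $2$-opt move on $C$ reverses a cyclic segment, and after reversal the forward/backward counts of that segment swap; there is no reason this leaves $\sigma^-$ (or $\sigma_{\min}$) unchanged, and in the worst case it increases $\sigma^-$ by the length of the reversed segment. Knowing that vertices of $Z$ have many chords gives you candidate $2$-opts, but not ones that control $\sigma$; and even a $\sigma$-preserving $\widetilde C$ would still need to admit a good insertion point, which you have not argued. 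So this is a genuine gap, not merely a missing computation.

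The paper's actual argument avoids exactly this wall by exploiting the slack built into the hypothesis $\sigma(C) \le \ell - |P|$, which your framing (``insert $P$ at a single edge of $C$'') throws away. Concretely, the paper proceeds by induction on $s = |P|$. Instead of seeking a good index (an $i$ with $c_i \in N(u_1)$, $c_{i+1} \in N(u_s)$), it considers the larger shifted set $I = \{\, i + t \pmod{|C|} : v_i \in N(u_1),\ t \in [s-2]\,\}$, which satisfies $|I| \ge d(u_1,C) + s - 3$; this boosts the pigeonhole enough that $I \cap N(u_s)$-type intersections exist for \emph{all} $s < \delta(G)$. Given $v_i \in N(u_1)$ and $v_{i+t} \in N(u_s)$ with $t \in [s-2]$, the paper either closes a Hamilton cycle directly (if $t = 1$), or absorbs a suffix $u_k \dots u_s$ of $P$ into a new cycle $C'$ through a vertex between $v_i$ and $v_{i+t}$ and applies the induction hypothesis to the shortened path $u_1 \dots u_{k-1}$, using $\sigma(C') \le \ell - |u_1\dots u_{k-1}|$; or, failing both, it performs one explicit rotation of $C$ whose $+4$ cost in $\sigma^-$ is absorbed because $\sigma^-(C) \le \ell - s$ and $s \ge 4 + \sigma^-(P)$ (the latter being exactly why $s = 4$ with $\sigma^-(P) = 1$ has to be handled by a separate claim). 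In short, the variable path length in the lemma's hypothesis is what makes the induction close, and a fixed single-edge insertion with a structural case analysis on $C$ does not see that slack. To salvage your approach you would, at minimum, need to prove the $\sigma$-preserving $2$-opt lemma you are hoping for, and I suspect that attempt would lead you back to something like the paper's induction.
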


\begin{proof}
Let $P = u_1 \dots  u_s$ and $C = v_1 \dots v_{n-s}v_1$ be as in the statement of the lemma.
The indices of~$v_i$ are considered modulo~$n-s$.  
We may assume that $\sigma^-(P)=\sigma(P)$ and $\sigma^-(C) = \sigma(C)$.
In particular, we have $\sigma^-(P)\leq1$ and $\sigma^-(C)\leq\ell-|P|$. 
Furthermore, we have $2\le s<\delta(G)$. We proceed by induction on~$s$.

First, suppose that $s =2$.
We have $\sigma^-(P) = \sigma(P) = 0$. 
Since 
\begin{align*}
d(u_1,C)+d(u_2,C)\ge 2(\delta(G) - 1) \ge n > |C|,
\end{align*}
there exists $i \in [n-2]$ such that $v_i \in N(u_1)$ and $v_{i+1} \in N(u_2)$. 
Then $C^* = v_1 \dots v_i u_1 u_2 v_{i+1} \dots v_{n-2} v_1$ is a Hamilton cycle in~$G$ such that $\sigma^-( C^* ) \le \sigma^-(C) + 2 \le \ell$. Hence, $\sigma(C^*)\leq\ell$.

Second, suppose that $s =3$. 
Since
$$d(u_1,C)+d(u_3,C)\ge2(\delta(G) - 2) \ge n-2 > |C|,$$
there exists $i \in [n-3]$ such that $v_i \in N(u_1)$ and $v_{i+1} \in N(u_3)$.
Then $C^* = v_1 \dots v_i u_1 u_2u_3 v_{i+1} \dots v_{n-3} v_1$ is a Hamilton cycle in~$G$ such that $\sigma^-( C^* ) \le \sigma^- (C) + 2 + \sigma^-(P) \le \ell$. Hence, $\sigma(C^*)\leq\ell$.

Next, we consider the special case when $s=4$ and $\sigma^-(P)=1$. 

\begin{claim} \label{clm:badpath}
If $s = 4$ and $\sigma^-(P) = 1$, then there exists a Hamilton cycle~$C^*$ in~$G$ with $\sigma(C^*)\le\ell$ or there exists a path $P'$ with $V(P')=V(P)$ and $\sigma(P')=0$. 
\end{claim}

\begin{proofclaim}
Suppose that $s=4$ and $\sigma^-(P) = 1$. 
If $G[V(P)]$ is a tournament, then there exists a  directed path $P'$ with $V(P')=V(P)$. In particular, $\sigma(P')=0$. 

If $G[V(P)]$ is not a tournament, we must have $d(u_1,P) + d(u_4, P) \le 5$, implying that 
\begin{align*}	
	d(u_1,C) + d(u_4, C) \ge 2 \delta(G) - (d(u_1,P) + d(u_4, P)) \ge n-3 > |C|. 
\end{align*}
Thus there exists $i \in [n-4]$ such that $v_i \in N(u_1)$ and $v_{i+1} \in N(u_4)$. 
In particular, the cycle $C^* = v_1 \dots v_i u_1\dots u_4 v_{i+1} \dots v_{n-4} v_1$ in $G$ is Hamiltonian and $\sigma^- ( C^* ) \le \sigma^- (C) + 2 + \sigma^-(P) \le \ell-1$. Hence $\sigma(C^*)\leq\ell$, as required.
\end{proofclaim}

Since the lemma holds for $s\in\{2,3\}$, we may assume that $s\geq4$. By Claim~\ref{clm:badpath}, we may further assume that if $s=4$ then $\sigma^-(P)=0$. In particular, we have 
\begin{align}
    s  \ge 4 +\sigma^-(P).  \label{eqn:s+sigma(P)}
\end{align}

Let $I := \{ i + k  \pmod{|C|}: v_{i} \in N(u_1), k \in [s-2]\}$ and $J := \{j  \pmod{|C|} : v_j \in N(u_s )\}$.
\begin{claim}\label{clm:I+J}
We have $I\cap J\ne\emptyset$.
\end{claim}
\begin{proofclaim}
First, observe that $|J| = d(u_s,C) \ge \delta(G) - (s-1) > 1$ where the last inequality follows from the assumption $s=|P|<\delta(G)$. 
If $|I| = |C|$, then $I \cap J = J \ne \emptyset$.
We may assume that $|C| > |I| \ge  d(u_1,C) + s-3$. 
Finally, 
\begin{align*}
	|I \cap J| & \ge |I| +|J| - |C|  \ge  d(u_1, C) +s-3 + d(u_s,C) - (n-s)\\
	&\ge 2 (  \delta(G) - s+1 ) + 2s - 3  -n \ge 1
\end{align*}
as required. 
\end{proofclaim}
By Claim~\ref{clm:I+J}, there exists some $j\in I\cap J$. Without loss of generality, we may assume that $v_{n-s} \in N(u_1)$ and $v_j \in N(u_s)$ for some $j \in [s-2]$, where we take $j$ to be as small as possible. 
If $j=n-s$, then by minimality of $j$ we have $d(u_s,C)=1$. 
This is a contradiction since $d(u_s,C)\ge\delta(G)-s+1>1$.
Thus $j\ne n-s$ and in particular $v_j$ and $v_{n-s}$ are distinct. If $j =1$, then set $C^* = u_1 \dots u_s v_1 \dots v_{n-s}u_1$ and note that $\sigma^-(C^*) \le \sigma^- (C) + 2 + \sigma^-(P) \le \ell$.
Thus we may assume that $j \in [2,s-2]$.

Note that $u_s \notin N(v_{j-1})$ by the minimality of~$j$. 
If $u_k \in N(v_{j-1})$ with $k \in [s-2] \setminus \{2\}$, then the path $P' = u_{1} \dots u_{k-1}$\footnote{When $k=1$, we take $P'$ to be empty.} and the cycle $C' = v_1 \dots v_{j-1} u_k \dots u_s v_j \dots v_{n-s} v_1$ (see Figure~\ref{fig:C*}(a)) satisfy $\sigma^-(P') \le 1$ and
\begin{align*}
 \sigma^- (C') \le \sigma^- (C) + 2 + \sigma^-(u_k\dots u_s)
 \le\ell-s+3\le \ell - |P'|.
\end{align*}
When $k=1$, $C'$ is a Hamilton cycle and so we are done.  
When $k \in [3, s-2]$, we are done by our induction hypothesis on~$C'$ and~$P'$ as $2 \le |P'| < s < \delta(G)$.
If $u_{s-1}u_s \in E(G)$, then $u_{s-1} \notin N(v_{j-1})$ or else we apply our induction hypothesis on the path $P' = u_{1} \dots u_{s-2}$ and the cycle $C' = v_1 \dots v_{j-1} u_{s-1} u_s v_j \dots v_{n-s} v_1$ (note that $\sigma^-(u_{s-1}u_s)=0$ and so $\sigma^-(C')\le\ell-|P'|$).
Hence $d(v_{j-1},P) \le 1 + \mathbbm{1}(u_s u_{s-1} \in E(G))$.

Note that $u_1 \notin N(v_{1})$ by minimality of~$j$ (or else we can relabel~$v_i$ as~$v_{i-1}$ for every $i\in[n-s]$). 
If $u_{k'} \in N(v_{1})$ with $k' \in [s] \setminus \{1,2,s-1\}$, then consider the path $P'' = u_{k'+1} \dots u_{s}$\footnote{When $k'=s$, we take $P''$ to be empty.} and the cycle $C'' = v_1 \dots v_{n-s} u_1 \dots u_{k'} v_1$  (see Figure~\ref{fig:C*}(b)).
By a similar argument as in the preceding paragraph, if $k'\in[s]\setminus\{1,s-1\}$ or $k'=2$ and $u_1u_2 \in E(G)$ then one can use the inductive hypothesis to conclude the proof.
Hence, we may assume $d(v_{1},P) \le 1 + \mathbbm{1}(u_2 u_{1} \in E(G))$.

In summary,   
\begin{align*}
	d(v_1, P) + d(v_{j-1},P) \le 2+ \mathbbm{1}({u_2 u_1 \in E(G)})+ \mathbbm{1}(u_s u_{s-1} \in E(G)) \le 2+ \sigma^-(P) \le 3. 
\end{align*}
Hence 
\begin{align*}
	| N(v_{1}) \cap \{ v_k: v_{k+1} \in N(v_{j-1}) \} | 
		& \ge d( v_{1}, C) + d( v_{j-1}, C) - |C| \\
		& \ge 2 \delta(G) - d(v_1, P) - d(v_{j-1},P) - (n-s) \ge s-1.
\end{align*}
Therefore there exist at least $s-1$ many $k \in [n-s] $ such that $v_k \in N(v_{1})$ and $v_{k+1} \in N(v_{j-1})$. 
Note that $k \ne 1$, so we may pick $k \notin [ s- 2 ] \cup \{n-s\}$. 
Then $C^* = v_1 \dots v_{j-1} v_{k+1} \dots v_{n-s} u_1 \dots u_s v_j \dots v_{k} v_1$ (see Figure~\ref{fig:C*}(c)) is a Hamilton cycle on~$G$ such that 
\begin{align*}
\sigma^- ( C^* ) \le \sigma^- (C) + 4 + \sigma^-(P) \le \ell - s +4 + \sigma^-(P) \le \ell,
\end{align*}
where the last inequality follows from \eqref{eqn:s+sigma(P)}.
\begin{figure}[!tb]
\centering
\begin{subfigure}{.32\textwidth}
    \centering
    \begin{tikzpicture}
    
    \draw (0,0) circle (2cm) node{$C'$};
    \draw[ultra thick] (45:2) arc (45:-300:2);

    \node at (-1.2,2.6) {$P'$};

    \draw (-2,3) -- (2,3);
    \draw[ultra thick] (-2,3) -- (-0.4,3);
    \draw[ultra thick] (0.4,3) -- (2,3);

    \draw[ultra thick] (0.4,3) -- (60:2);
    \draw[ultra thick] (2,3) -- (45:2);

    \fill (-2,3) circle (2pt) node[anchor=south]{$u_1$};
    \fill (-0.4,3) circle (2pt) node[anchor=south east]{$u_{k-1}$};
    \fill (0.4,3) circle (2pt) node[anchor=south]{$u_k$};
    \fill (2,3) circle (2pt) node[anchor=south]{$u_s$};

    \fill (60:2) circle (2pt) node[anchor=north east]{$v_{j-1}$};
    \fill (45:2) circle (2pt) node[anchor=north east]{$v_j$};
  
      \end{tikzpicture}
      \caption*{(a) Path $P'$ and cycle $C'$}
\end{subfigure}
\hfill
\begin{subfigure}{.32\textwidth}
    \centering
    \begin{tikzpicture}
     
    \draw (0,0) circle (2cm) node{$C''$};
    \draw[ultra thick] (120:2) arc (120:-225:2);

    \node at (1.2,2.6) {$P''$};

    \draw (-2,3) -- (2,3);
    \draw[ultra thick] (-2,3) -- (-0.4,3);
    \draw[ultra thick] (0.4,3) -- (2,3);

    \draw[ultra thick] (-0.4,3) -- (120:2);
    \draw[ultra thick] (-2,3) -- (135:2);

    \fill (-2,3) circle (2pt) node[anchor=south]{$u_1$};
    \fill (-0.4,3) circle (2pt) node[anchor=south]{$u_{k'}$};
    \fill (0.4,3) circle (2pt) node[anchor=south west]{$u_{k'+1}$};
    \fill (2,3) circle (2pt) node[anchor=south]{$u_s$};

    \fill (120:2) circle (2pt) node[anchor=north west]{$v_{1}$};
    \fill (135:2) circle (2pt) node[anchor=north west]{$v_{n-s}$};
  
\end{tikzpicture}
\caption*{(b) Path $P''$ and cycle $C''$}
\end{subfigure}
\hfill
\begin{subfigure}{.32\textwidth}
    \centering
    \begin{tikzpicture}

    \draw (0,0) circle (2cm) node[anchor=north east]{$C^*$};
    \draw[ultra thick] (-2,3) -- (2,3) -- (45:2) arc (45:-45:2) -- (120:2) arc (120:60:2) -- (-60:2) arc (-60:-225:2) -- (-2,3);

    \fill (-45:2) circle (2pt) node[anchor= north west]{$v_k$};
    \fill (-60:2) circle (2pt) node[anchor= north west]{$v_{k+1}$};

    \fill (-2,3) circle (2pt) node[anchor=south]{$u_1$};
    \fill (2,3) circle (2pt) node[anchor=south]{$u_s$};

    \fill (60:2) circle (2pt) node[anchor=south]{$v_{j-1}$};
    \fill (45:2) circle (2pt) node[anchor=west]{$v_j$};
    \fill (120:2) circle (2pt) node[anchor=south]{$v_{1}$};
    \fill (135:2) circle (2pt) node[anchor=east]{$v_{n-s}$};
		
\end{tikzpicture}
\caption*{(c) The Hamilton cycle~$C^*$}
\end{subfigure}
\caption{}\label{fig:C*}
\end{figure}   
\end{proof}

%%%%%%%%%%%%%%%%%%%%%%%%%%%%%%%%%%%%%%%%%%%%%%%%%%%%%%%%%%%%%%%%%%%%%%%%%%%%

We are now ready to prove Theorem~\ref{thm:main}.

\begin{proof}[{\bf Proof of Theorem~\ref{thm:main}}]
Given a cycle~$C$, note that $|C| = \sigma_{\max}(C)+ \sigma(C)$.
Theorem~\ref{thm:main} is equivalent to the following statement. 
Let $n, \ell \in \mathbb{N}$ with $n \ge \max \{ 2\ell, 3 \}$. 
Let $G$ be an oriented graph on $n$ vertices with $\delta(G)  = n- \ell$.
Then $G$ contains a Hamilton cycle~$C^*$ such that $\sigma (C^*)  \le \ell$.

If $\ell =1$, then $G$ is a tournament on $n$ vertices and contains a directed Hamilton path~$P = v_1 \dots v_n$. 
Then the Hamilton cycle $C^* = v_1 \dots v_n v_1$ satisfies $\sigma (C^*) \le 1$. 
 
We may assume that $\ell \ge 2$ and proceed by induction on~$n$. 
If $n \in \{ 2\ell, 2 \ell+1\}$, then Dirac's theorem implies that $G$ contains a Hamilton cycle~$C^*$, with $\sigma(C^*) \le \lfloor n/2 \rfloor \le \ell$. 
Thus we may assume that $n \ge 2 \ell+2$, and so $\delta(G)\ge n/2 + 1$ (this is needed to apply Lemma~\ref{lem:path+cycle}). 
Suppose to the contrary that $G$ does not contain a Hamilton cycle~$C^*$ with $\sigma (C^*) \le \ell$.

Let $C = v_1 \dots v_{n-1}v_1$ be a cycle in $G$ on $n-1$ vertices such that $\sigma(C)$ is minimum. 
The indices of~$v_i$ are considered modulo~$n-1$. 
Without loss of generality, $\sigma(C) =  \sigma^-(C) $. 
Let $\{w \} = V(G) \setminus V(C)$.
Note that $\delta(G \setminus w) \ge n-\ell -1 = |G\setminus w| - \ell$. 
By removing edges if necessary, we may assume equality holds. Furthermore, the assumption $n\ge 2\ell+2$ implies $n-1\ge\max\{2\ell,3\}$.
By induction hypothesis on~$G\setminus w$, we have $\sigma^- (C) \le \ell$ (since $\sigma(C)$ is minimal).

Suppose that $\sigma^-(C) \le \ell-2$.
Since $\delta(G) \ge n/2 +1> |C|/2 $, there exists $i \in [n-1]$ such that $v_{i},v_{i+1} \in N(w)$.
The Hamilton cycle $C^* = v_1 \dots v_{i} w v_{i+1} \dots v_{n-1}v_1$ (which is obtained from~$C$ by inserting~$w$ in between~$v_i$ and~$v_{i+1}$) satisfies $\sigma^- (C^*) \le \sigma^-(C) +2 \le \ell$, a contradiction. 
Therefore, 
\begin{align*}
 \ell -1 \le \sigma^-(C) \le \ell.
\end{align*}

The next claim considers the case when $v_i, v_j \in N(w)$ and all but at most one edge in the path $v_{i} v_{i+1}\dots v_{j}$ have form $v_{k+1}v_k$ (i.e. all but at most one edge contribute to $\sigma^-(C)$). Lemma~\ref{lem:path+cycle} will allow us to determine the orientation of the edges incident to $v_i,w$ and $v_j,w$.

\begin{claim} \label{claim:pattern}
Let $v_i, v_j \in N(w)$ with $v_i\ne v_j$ and $P_{ij} := v_{i} v_{i+1}\dots v_{j}$. The following statements hold: 
\begin{enumerate} [label={\rm(\roman*)}]
	%\item if $j = i+1$, then $v_iv_j, v_jw,w v_i \in E(G)$; \label{itm:pattern1}
	\item if $\sigma^-(P_{ij}) = e (P_{ij})$, then $v_jw,w v_i \in E(G)$; \label{itm:pattern2}
	\item if $\sigma^-(P_{ij}) = e (P_{ij}) -1$ and $v_iw \in E(G)$, then $v_jw \in E(G)$. \label{itm:pattern3}
\end{enumerate}
\end{claim}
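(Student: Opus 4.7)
The plan is to prove both parts of the claim by contradiction through a single uniform construction. Suppose the stated conclusion fails. In every case, I consider the cycle
\[
C^\dagger := v_1 v_2 \cdots v_i \, w \, v_j v_{j+1} \cdots v_{n-1} v_1
\]
obtained from $C$ by bypassing the interior vertices $v_{i+1}, \ldots, v_{j-1}$ of $P_{ij}$ through $w$. This exists in the underlying oriented graph since $v_i, v_j \in N(w)$ and all remaining edges are inherited from $C$. Tracking how the two new edges incident to $w$ contribute to $\sigma^-$ gives
\[
\sigma^-(C^\dagger) \;=\; \sigma^-(C) \;-\; \sigma^-(P_{ij}) \;+\; \mathbbm{1}(w v_i \in E(G)) \;+\; \mathbbm{1}(v_j w \in E(G)).
\]

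Under each contradiction hypothesis, at least one indicator vanishes. For Part~\ref{itm:pattern2} with $\sigma^-(P_{ij}) = j - i$, supposing either $v_i w \in E$ (forcing $\mathbbm{1}(w v_i) = 0$) or $w v_j \in E$ (forcing $\mathbbm{1}(v_j w) = 0$) leaves at most one surviving indicator and yields $\sigma^-(C^\dagger) \le \sigma^-(C) - (j-i) + 1$. For Part~\ref{itm:pattern3} with $\sigma^-(P_{ij}) = j - i - 1$, the hypothesis $v_i w \in E$ together with the contradiction assumption $w v_j \in E$ kills both indicators, giving the same bound $\sigma^-(C^\dagger) \le \sigma^-(C) - (j-i) + 1 \le \ell - (j-i) + 1$.

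I then split on $j-i$. If $j - i = 1$, the excluded set is empty, so $C^\dagger$ is a Hamilton cycle with $\sigma(C^\dagger) \le \sigma^-(C^\dagger) \le \ell$, directly contradicting the standing assumption of the proof of Theorem~\ref{thm:main} that $G$ has no Hamilton cycle with $\sigma \le \ell$. If $j - i = 2$, then $C^\dagger$ is a cycle on $n-1$ vertices (excluding $v_{i+1}$) with $\sigma(C^\dagger) \le \sigma^-(C^\dagger) \le \sigma^-(C) - 1 < \sigma(C)$, contradicting the minimality of $\sigma(C)$ among cycles on $n-1$ vertices. If $j - i \ge 3$, I apply Lemma~\ref{lem:path+cycle} with path $P := v_{i+1} v_{i+2} \cdots v_{j-1}$ and cycle $C^\dagger$. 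The required hypotheses hold: $P$ is a sub-path of $P_{ij}$ containing at most one forward edge, so $\sigma(P) \le 1$; one has $2 \le |P| = j - i - 1 < \delta(G)$ using $j - i \le \ell + 1$ (which follows from $\sigma^-(P_{ij}) \le \sigma^-(C) \le \ell$) together with $n \ge 2\ell + 2$; and the $\sigma$-computation above gives exactly $\sigma(C^\dagger) \le \ell - |P|$. The lemma then produces a Hamilton cycle $C^*$ with $\sigma(C^*) \le \ell$, once again a contradiction.

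The main obstacle is the $\sigma^-$-bookkeeping: making sure the two added edges $v_i w$ and $w v_j$ are handled correctly under every contradiction hypothesis, and confirming in each regime of $j - i$ that the resulting bound on $\sigma(C^\dagger)$ is strong enough to invoke Lemma~\ref{lem:path+cycle} or to beat $\sigma(C)$ in the small cases.
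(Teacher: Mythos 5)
Your proof is correct and follows essentially the same route as the paper: the same replacement cycle $C^\dagger = v_1 \cdots v_i\, w\, v_j \cdots v_{n-1} v_1$, the same $\sigma^-$-accounting (the paper writes $\sigma^-(v_1 w v_j)$ where you use indicator functions), the same three-way split on $j-i \in \{1\}$, $\{2\}$, $\{\ge 3\}$, and the same invocation of Lemma~\ref{lem:path+cycle} in the last regime. The only difference is presentational: the paper proves part~(i) in detail and says part~(ii) is ``similar,'' whereas you carry both parts through the single unified bound $\sigma^-(C^\dagger) \le \sigma^-(C) - (j-i) + 1$.
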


\begin{proofclaim} 
Suppose to the contrary that \ref{itm:pattern2} or \ref{itm:pattern3} does not hold. That is, there exist $v_i\not=v_j$ in $N(w)$ such that either $\sigma^-(P_{ij})=e(P_{ij})$ and $\sigma^{-}(v_iwv_j)\le1$ or $\sigma^-(P_{ij})=e(P_{ij})-1$ and $\sigma^{-}(v_iwv_j)=0$. Without loss of generality, say $i =1$. In both cases, we have 
\begin{align*}
	\sigma^-(P_{1j}) - \sigma^- (v_1wv_j) \ge e(P_{1j}) - 1 = j-2. 
\end{align*}
Let $C' = v_1  w v_{j} \dots v_{n-1}v_1$, so 
\begin{align*}
	\sigma^-(C') & =  \sigma^-(C) - \sigma^-(P_{1j}) +\sigma^- (v_1wv_j) \le \sigma^-(C) - j +2.
\end{align*}
If $j = 2$, then $C'$ is a Hamilton cycle in $G$ with $\sigma(C') \le \sigma^-(C) \le \ell$, a contradiction. 
If $j = 3$, then $C'$ is a cycle on $n-1$ vertices with $\sigma(C') \le \sigma (C) -1$ contradicting the minimality of~$\sigma(C)$. 
If $j \ge 4$, then $P = v_{j-1} v_{j-2} \dots v_2$ is a path with $\sigma(P) \le 1$ and 
\begin{align*}
	2\le|P| & = j-2  \le \sigma^-(P_{1j})  \le \sigma^-(C)  \le \ell < n/2 < \delta(G). 
\end{align*}
Note that $V(P)$ and $V(C')$ partition~$V(G)$ and $\sigma(C') \le \ell - |P|$, hence by Lemma~\ref{lem:path+cycle} $G$ contains a Hamilton cycle~$C^*$ such that $\sigma (C^*) \le \ell$, a contradiction. 
\end{proofclaim}

Since $\delta(G)\ge n/2+1$, there exists $i^* \in [n-1]$ such that $v_{i^*},v_{i^*+1} \in N(w)$.
If $\sigma(C)  = \ell-1$, then by considering the Hamilton cycle $v_1 \dots v_{i^*} w v_{i^*+1} \dots v_{n-1}v_1$, we deduce that
\begin{align}
	v_{i^*}v_{i^*+1}, wv_{i^*}, v_{i^*+1}w \in E(G). \label{eqn:i^*} 
\end{align}
Define $J \subseteq [n-1]$ of size $\ell$ such that 
\begin{align*}
	J = \begin{cases}
		\{ i  \in [n-1] : v_{i+1}v_i \in E(G)\} & \text{if $\sigma(C)  = \ell$,}\\
		\{ i  \in [n-1] : v_{i+1}v_i \in E(G)\} \cup \{i^*\} & \text{if $\sigma(C)  = \ell-1$}.
	\end{cases}
\end{align*}
Recall that $\ell < n-1$, so (by rotating the cycle if necessary) we may assume that $1 \in J$ and $n-1 \notin J$. 
Roughly speaking, $J$ consists of all indices~$i$ such that $v_{i+1}v_i$ contributed to~$\sigma(C)$ together with $i^*$ if and only if $\sigma(C) = \ell-1$. 
Note that $J$ can be partitioned in disjoint intervals $J_1, \dots, J_q$ such that 
\begin{align*}
J_j  &= [a_j, a_j + t_j - 1] \text{ for all $j \in [q]$ and }\\
  1= a_1 < a_1+t_1 & < a_2 <a_2+t_2 < a_3 < \dots < a_q < a_{q}+t_q\le n-1.
\end{align*}
In particular,
\begin{align}
 \sum_{j \in [q]} t_j &= |J| =  \ell. \label{eqn:sumt_j}
\end{align}
Note that if $i^* \notin [a_j, a_j + t_j - 1]$, then $v_{a_j + t_j} v_{a_j + t_j-1} \dots v_{a_j}$ is a directed path. Furthermore, $v_{a_j+t_j}v_{a_j+t_j+1}\dots,v_{a_{j+1}}$ is always a directed path, regardless of $i^*$. Hence, the path $v_{a_j}v_{a_j+1}\dots,v_{a_{j+1}}$ in $C$ consists of an initial set of edges pointing backwards (except for, possibly, the edge $v_{i^*}v_{i^*+1}$) and a second set of edges pointing forwards. Our aim is to determine the orientation of the edges adjacent to $w$ and $N(w)\cap[a_j,a_{j+1}-1]$.

Let $W = \{i \in [n-1] : v_i \in N(w) \}$. In the next claim, we show that $|W \cap [a_j, a_{j}+t_j] | \le 2$ for every $j \in [q]$. 
This is a fairly easy consequence of Claim~\ref{claim:pattern}.

\begin{claim} \label{clm:2}
Suppose that $|W \cap [a_j, a_{j}+t_j] | \ge 2$, then $W \cap [a_j, a_{j}+t_j]= \{r_j,s_j\}$ with $r_j < s_j$ and $wv_{r_j}, v_{s_j}w \in E(G)$.
Moreover, if $r_j \ne a_j$, then $v_{r_j} v_{r_j-1} \dots v_{a_j}$ is a directed path, and if $s_j \ne a_{j}+t_j$, then $v_{a_{j}+t_j} v_{a_{j}+t_j-1} \dots v_{s_j}$ is a directed path.
\end{claim}

\begin{proofclaim}
We first choose $r_j,s_j$ as follows. 
If $\sigma^-(J) = \ell-1$ and $i^* \in [a_j, a_{j}+t_j-1] \subseteq J$, then we set $r_j = i^*$ and $s_j=i^*+1$. 
By~\eqref{eqn:i^*}, we have $wv_{r_j}, v_{s_j}w \in E(G)$.
If $\sigma^-(J) = \ell$ or $i^* \notin [a_j, a_{j}+t_j-1]$, then pick $r_j, s_j \in W \cap [a_j, a_{j}+t_j]$ with $r_j < s_j$ and $[r_j,s_j] \cap W = \{r_j,s_j\}$. 
Note that $v_{s_j} v_{s_j-1} \dots v_{r_j}$ is a direct path, so $\sigma^-(v_{r_j} v_{r_j+1} \dots v_{s_j}) = e(v_{r_j} v_{r_j+1} \dots v_{s_j})$.
By Claim~\ref{claim:pattern}\ref{itm:pattern2} (with $i=r_j$ and $j=s_j$), we have $w v_{r_j}, v_{s_j} w  \in E(G)$.

In both cases, we have $\sigma^-( v_{a_j} \dots v_{r_j} ) = e ( v_{a_j} \dots v_{r_j} )$ and $\sigma^-( v_{s_j} \dots v_{a_{j}+t_j} ) = e ( v_{s_j} \dots v_{a_{j}+t_j} )$ implying the moreover statement. 

To complete the proof of the claim, it suffices to show that $W \cap [a_j,r_j] = \{r_j\}$ and $W \cap [s_j,a_{j}+t_j] = \{s_j\}$.
If there exists $i \in W \cap [a_j,r_j]$ and $i\ne r_j$, then Claim~\ref{claim:pattern}\ref{itm:pattern2} (with $j = r_j$) implies that $v_{r_j} w \in E(G)$, a contradiction. 
Hence $W \cap [a_j,r_j] = \{r_j\}$ and similarly $W \cap [s_j,a_{j}+t_j] = \{s_j\}$.
\end{proofclaim}

Note that Claim~\ref{clm:2} holds independent of $\sigma(C) = \ell$ or $\sigma(C) = \ell-1$. 

Let $a_{q+1} = n$. 
Our next goal is to establish an analogue result for the intervals $[a_j,a_{j+1}-1]$, subject to certain conditions.  
To this end, we define the following new parameter. For each $j \in [q]$, let 
\begin{align*}
	m_j := | [a_j, a_{j+1}-1] \setminus W | -t_j+1.
\end{align*}
Note that Claim~\ref{clm:2} implies that $| [a_j, a_{j}+t_j] \setminus W | \ge (t_j +1)-2 = t_j-1$.
Roughly speaking, $m_j$ counts the additional vertices in $\{v_i \in [a_{j}+t_j+1,a_{j+1}-1]\}$ that are not neighbours of~$w$. 
Note that 
\begin{align}
	|W \cap [a_j, a_{j}+t_j] |  \ge t_j+1 - | [a_j, a_{j+1}-1]  \setminus W | = 2 -m_j .
	\label{eqn:m_j}
\end{align}
We now show that $m_j \ge 0$ for every $j\in[q]$. Furthermore, if equality holds, then we gain some additional information of the orientation of the edges between $w$ and $\{v_i : i \in [a_j, a_{j+1}-1] \}$.

\begin{claim} \label{clm:m_j}
Suppose that $m_j \le 0$.
Then $m_j = 0$ and there exist $r^*_j,s^*_j \in W \cap [a_j, a_{j+1}-1]$ with $r^*_j < s^*_j$ such that
\begin{enumerate} [label={\rm(\roman*$'$)}]
	\item $wv_{r^*_j} \in E(G)$ and if $r^*_j \ne a_j$, then $v_{r^*_j} v_{r^*_j-1} \dots v_{a_j}$ is a directed path; \label{itm:m_j4}
	\item $v_{s^*_j} w \in E(G)$ and if $s^*_j \ne a_{j+1}-1$, then $v_{a_{j+1}-1} v_{a_{j+1}-2} \dots v_{s^*_j}$ is a directed path. \label{itm:m_j5}
\end{enumerate}
\end{claim}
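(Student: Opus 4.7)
The approach is to apply Claim~\ref{clm:2} to pin down the two elements of $W \cap [a_j, a_j+t_j]$, take $r^*_j := r_j$, and then choose $s^*_j$ by a short case analysis on the size of the ``gap'' $g := a_{j+1}-a_j-t_j \ge 1$.

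First, combining the hypothesis $m_j \le 0$ with \eqref{eqn:m_j} forces $|W \cap [a_j, a_j+t_j]| \ge 2$, so Claim~\ref{clm:2} applies and yields $W \cap [a_j, a_j+t_j] = \{r_j, s_j\}$ with $r_j < s_j$, $wv_{r_j} \in E(G)$ and $v_{s_j}w \in E(G)$. Feeding the exact equality $|W \cap [a_j, a_j+t_j]|=2$ back into \eqref{eqn:m_j} gives $m_j \ge 0$, whence $m_j = 0$. Setting $r^*_j := r_j$, the verification of \ref{itm:m_j4} reduces to the directed-path condition, which asks that $v_{k+1}v_k \in E(G)$ for every $k \in [a_j, r_j-1] \subseteq J_j$. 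The only possible obstruction (when $\sigma(C) = \ell-1$) is $k = i^*$, but Claim~\ref{clm:2} sets $r_j = i^*$ precisely when $i^* \in J_j$, so $i^* \notin [a_j, r_j-1]$.

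The choice of $s^*_j$ splits on the gap. If $g = 1$, I would take $s^*_j := s_j$; the directed path $v_{a_j+t_j} \dots v_{s_j}$ is verified analogously, using that Claim~\ref{clm:2} sets $s_j = i^* + 1$ whenever $i^* \in J_j$, so that $i^* \notin [s_j, a_j+t_j-1]$. If $g \ge 2$, a quick count using $m_j = 0$ and $|W \cap [a_j, a_j+t_j]| = 2$ shows $[a_j+t_j+1, a_{j+1}-1] \subseteq W$, i.e., every vertex of the forward stretch neighbours~$w$. I would then apply Claim~\ref{claim:pattern}\ref{itm:pattern3} to the path $P = v_{s_j} v_{s_j+1}\dots v_{a_j+t_j+1}$: all its edges are backward (inside $J_j$, since the placement of $s_j$ avoids $i^*$) except the single forward edge $v_{a_j+t_j}v_{a_j+t_j+1}$, so $\sigma^-(P) = e(P) - 1$; combined with $v_{s_j}w \in E(G)$, this yields $v_{a_j+t_j+1}w \in E(G)$. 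I would then iterate Claim~\ref{claim:pattern}\ref{itm:pattern3} along the length-one forward edges $v_k v_{k+1}$ for $k \in [a_j+t_j+1, a_{j+1}-2]$, propagating the orientation forward to $v_{a_{j+1}-1}w \in E(G)$, and set $s^*_j := a_{j+1}-1$ (for which the directed-path condition is vacuous).

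The main obstacle is the careful bookkeeping around $i^*$ in the case $\sigma(C) = \ell - 1$: Claim~\ref{claim:pattern}\ref{itm:pattern3} requires $\sigma^-(P) = e(P) - 1$ on the nose, and in our setting the ``artificial'' index $i^*$ corresponds to the forward edge $v_{i^*}v_{i^*+1}$, which could inflate $e(P) - \sigma^-(P)$ above $1$. The specific placement $r_j = i^*$, $s_j = i^* + 1$ given by Claim~\ref{clm:2} is exactly what ensures the relevant sub-paths do not traverse $i^*$, so that the only forward edge of $P$ is the one straddling the boundary between $J_j$ and the gap.
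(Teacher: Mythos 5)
Your proposal is correct and follows the same route as the paper: use \eqref{eqn:m_j} to invoke Claim~\ref{clm:2}, conclude $m_j = 0$, set $r^*_j = r_j$, and split on whether $a_j + t_j = a_{j+1}-1$, in the latter case deducing $[a_j+t_j+1, a_{j+1}-1] \subseteq W$ and propagating $v_\cdot w \in E(G)$ along the gap via iterated applications of Claim~\ref{claim:pattern}\ref{itm:pattern3}. The only difference is presentational: you spell out the bookkeeping around $i^*$ that makes the directed-path conditions and the hypothesis $\sigma^-(P) = e(P)-1$ hold, which the paper leaves implicit in the phrase ``and so \ref{itm:m_j4} holds.''
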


\begin{proofclaim}
By~\eqref{eqn:m_j}, we have $|W \cap [a_j, a_{j}+t_j] | \ge 2-m_j \ge 2$. 
By Claim~\ref{clm:2}, we deduce that $|W \cap [a_j, a_{j}+t_j] |= \{r_j,s_j\}$ with $r_j < s_j$ and $wv_{r_j}, v_{s_j}w \in E(G)$.
This already implies that $m_j =0$. 
Set $r_j^* = r_j$ and so \ref{itm:m_j4} holds together with the moreover statement of Claim~\ref{clm:2}.

If $a_j+t_j=a_{j+1}-1$ then set $s_j^*= s_j$ and so \ref{itm:m_j5} holds (namely, $v_{a_{j+1}-1} v_{a_{j+1}-2} \dots v_{s^*_j}$ is a directed path). 
Suppose instead that $a_j+t_j<a_{j+1}-1$.
Since equality holds in~\eqref{eqn:m_j}, we have $[a_j+t_j+1, a_{j+1}-1] \subseteq W$. 
Note that $v_{a_j+t_j}v_{a_j+t_j+1} \in E(G)$ and, by the moreover statement of Claim~\ref{clm:2}, $v_{a_{j}+t_j} v_{a_{j}+t_j-1} \dots v_{s_j}$ is a direct path.
By Claim~\ref{claim:pattern}\ref{itm:pattern3} (with $i = s_j$ and $j = a_j+t_j+1$), we have $v_{a_j+t_j+1} w  \in E(G)$.
For each $i \in [a_j+t_j+1, a_{j+1}-2]$, given $v_i w  \in E(G)$ we deduce $v_{i+1}w\in E(G)$ by Claim~\ref{claim:pattern}\ref{itm:pattern3} (with $j =i+1$).\footnote{Or by considering the Hamilton cycle~$C'$ obtained from~$C$ by inserting $w$ in between $v_i$ and $v_{i+1}$, that is, $C' = v_1 \dots v_i w v_{i+1} \dots v_{n-1}1$. Note that $\sigma(C') \le \sigma(C)+ \sigma^-(v_i w v_{i+1})$.} In particular $v_{a_{j+1}-1}w\in E(G)$, so set $s^*_j = a_{j+1}-1$ and \ref{itm:m_j5} holds. 
\end{proofclaim}

\begin{claim}\label{clm:average}
Without loss of generality, we may assume that $m_1 = 0$ and that there exists $q^* \in [1,q]$  such that $m_{q^*+1} = 0$ and $m_j =1 $ for all $2\leq j\leq q^*$ (if $q^*=q$, we let $m_{q+1} := m_1$). 
\end{claim}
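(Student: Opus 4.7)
My plan is to bound $\sum_{j=1}^q m_j$ from above, and then apply a simple pigeonhole argument on the cyclic sequence $(m_1,\dots,m_q)$ to isolate a zero, a (possibly empty) run of $1$'s, and a second zero. A cyclic relabelling of~$C$ then places this configuration at positions $1,\dots,q^*+1$, which is exactly the form asserted.

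For the upper bound, I would observe that the intervals $\{[a_j,a_{j+1}-1]\}_{j\in[q]}$ (with the convention $a_{q+1}:=n$) partition $[1,n-1]$. Since $|W|=d(w)\ge\delta(G)\ge n-\ell$, we have $\sum_j|[a_j,a_{j+1}-1]\setminus W|=(n-1)-|W|\le\ell-1$. Combining this with $\sum_j(t_j-1)=\ell-q$ from~\eqref{eqn:sumt_j} yields
\[\sum_{j=1}^q m_j \;\le\; (\ell-1)-(\ell-q) \;=\; q-1.\]

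For the pigeonhole step, recall that $m_j\ge 0$ by Claim~\ref{clm:m_j}, so the bound above forces at least one $m_j$ to equal~$0$; let $z\ge 1$ denote the number of such indices. Reading $(m_1,\dots,m_q)$ cyclically, these $z$ zeros partition the sequence into $z$ (possibly empty) arcs of non-zero entries. Each arc $A$ satisfies $\sum_{j\in A}m_j\ge|A|$, with equality iff every entry of $A$ equals $1$ (vacuously so when $A=\emptyset$). Summing over the $z$ arcs,
\[\sum_A\Bigl(\sum_{j\in A}m_j-|A|\Bigr) \;\le\; (q-1)-(q-z) \;=\; z-1 \;<\; z,\]
so at least one arc $A^*$ consists entirely of $1$'s. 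I expect this arc-sum inequality to be the main (though modest) obstacle; everything else is direct counting and cyclic bookkeeping.

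Finally, I would let $j_1\in[q]$ be the zero starting $A^*$ and cyclically rotate the labels of~$C$ so that $v_{a_{j_1}}$ becomes the new $v_1$. Since $a_{j_1}-1$ (interpreted cyclically) lies strictly between two consecutive $J$-intervals, this rotation preserves both $1\in J$ and $n-1\notin J$, and makes the new $m_1=0$. Taking $q^*+1$ to be the position of the ending zero of $A^*$ in the new labelling (with $q^*=q$ in the wrap-around case $z=1$) delivers the required pattern $m_1=0$, $m_{q^*+1}=0$, and $m_j=1$ for $2\le j\le q^*$.
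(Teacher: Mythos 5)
Your proof is correct and follows essentially the same approach as the paper: bound $\sum_{j}m_j\le q-1$ using $d(w)\ge n-\ell$ together with $\sum_j t_j=\ell$, note $m_j\ge 0$ from Claim~\ref{clm:m_j}, and then pigeonhole on the zeros in the cyclic sequence $(m_1,\dots,m_q)$ to produce a maximal run of ones sandwiched between two zeros. The only cosmetic difference is bookkeeping: the paper first relabels so that $m_1=0$ and then partitions $[q]$ linearly into intervals beginning at the zeros, whereas you partition the cyclic sequence into arcs between consecutive zeros and perform one rotation at the end; both are the same counting argument.
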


\begin{proofclaim}
Given $A\subseteq[q]$, let $m(A):=\sum_{j\in A} m_j$. Note that 
\begin{align}
	m([q])=\sum_{j \in [q]} m_j & = |[n-1] \setminus W| -\sum_{j \in [q]} t_j +q 
	= n-1 - d(w) -\sum_{j \in [q]} t_j +q
	\nonumber \\
	& \le \ell-1 -\sum_{j \in [q]} t_j +q
	\overset{\mathclap{\text{\eqref{eqn:sumt_j}}}}{=} q-1. \label{eqn:summ_j}
\end{align}
By~Claim~\ref{clm:m_j}, $m_j \ge 0$ for all $j \in [q]$. 
By~\eqref{eqn:summ_j}, $m_j=0$ for some $j$. 
We may assume that $m_1=0$. Partition $[q]$ into intervals $I_1,I_2,\dots,I_k$ so that $m_j=0$ if and only if $j$ is the smallest element of some interval $I_i$. Observe that 
$$\sum_{i \in [k]} m(I_i)=m([q])\overset{\mathclap{\text{\eqref{eqn:summ_j}}}}{<}q.$$
Hence, by the pigeonhole principle, there exists some $I_i$ such that $m(I_i)<|I_i|$. Up to relabelling, we may assume that $i=1$ and $I_1=[q^*]$ for some $q^*\in[q]$. By definition, we have $m_1=0$, $m_{q^*+1}=0$ and $m_j\ge 1$ for every $2\le j\le q^*$. In particular, we must have $m_j=1$ for every $2\le j\le q^*$, as otherwise $m(I_1)\ge |I_1|$. This concludes the proof.
\end{proofclaim}

Since $m_1= 0$, Claim~\ref{clm:m_j} implies that there exists $s^*_1 \in [a_1, a_2-1]$ with $v_{s^*_1} w \in E(G)$ and if $s^*_1 \ne a_{2}-1$, then $v_{a_{2}-1} v_{a_{2}-2} \dots v_{s^*_1}$ is a directed path. 
By mimicking the proof of Claim~\ref{clm:m_j}, we show that, for $j \in [2,q^*]$, we can find $s^*_j \in [a_j, a_{j+1}-1]$ with similar property.

\begin{claim} \label{clm:q^*}
For all $j\in[q^*]$, there exists $s^*_j \in W \cap [a_j, a_{j+1}-1]$ such that $v_{s^*_j} w \in E(G)$.
Moreover, if $s^*_j \ne a_{j+1}-1$, then $v_{a_{j+1}-1} v_{a_{j+1}-2} \dots v_{s^*_j}$ is a directed path.
\end{claim}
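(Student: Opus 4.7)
My approach is to mimic the argument of Claim~\ref{clm:m_j}, the key complication being that the bound $|W \cap [a_j, a_j + t_j]| \ge 2 - m_j$ from~\eqref{eqn:m_j} now only guarantees a single element when $m_j = 1$. The case $m_j = 0$ (which, in our setup, happens only for $j = 1$) is literally item~\ref{itm:m_j5} of Claim~\ref{clm:m_j} and requires no further argument. The substantive work is for the indices $j \in [2, q^*]$, where $m_j = 1$.

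For such~$j$, I would first locate a base point $s \in W \cap [a_j, a_{j+1}-1]$ with $v_s w \in E(G)$. When $|W \cap [a_j, a_j + t_j]| \ge 2$, Claim~\ref{clm:2} still applies verbatim and delivers $s := s_j$ with $v_{s_j} w \in E(G)$. When $|W \cap [a_j, a_j + t_j]| = 1$, a direct count from $|[a_j, a_{j+1}-1] \setminus W| = t_j$ forces $[a_j + t_j + 1, a_{j+1}-1] \subseteq W$; in this subcase I would extract a base edge $v_s w \in E(G)$ by considering the Hamilton cycle obtained from inserting~$w$ between a suitable pair of consecutive vertices of~$C$ whose indices lie in~$W$, and appealing to the standing assumption that no Hamilton cycle $C^*$ of~$G$ has $\sigma(C^*) \le \ell$, together with $\sigma^-(C) \in \{\ell-1, \ell\}$, to pin down the required orientation.

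With a base point $s$ secured, I would propagate the orientation $v_s w \in E(G)$ forward along the sub-path $v_s v_{s+1} \dots v_{a_{j+1}-1}$ of~$C$. Since every index in $[a_j + t_j, a_{j+1}-2]$ lies outside~$J$, the corresponding edges of~$C$ are all forward, and the relevant intermediate vertices all lie in~$W$ in the subcase above. Iterating Claim~\ref{claim:pattern}\ref{itm:pattern3} on each length-one sub-path $v_i v_{i+1}$ then yields $v_{i+1} w \in E(G)$ for each successive~$i$, culminating in $v_{a_{j+1}-1} w \in E(G)$, so I may set $s^*_j := a_{j+1}-1$ and the directed-path condition is vacuous. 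In the degenerate regime $a_j + t_j = a_{j+1}-1$, no propagation is needed: I take $s^*_j := s$, and the path $v_{a_{j+1}-1} v_{a_{j+1}-2} \dots v_{s^*_j}$ sits inside~$J_j$, so the same care with the exceptional index~$i^*$ as in Claim~\ref{clm:m_j} ensures that it is directed.

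The hard step is the first stage when $|W \cap [a_j, a_j + t_j]| = 1$, since Claim~\ref{clm:2} is no longer available to supply the orientation $v_s w$ for free. Overcoming this requires a tailored case analysis on the position of the unique $W$-element in $[a_j, a_j + t_j]$ (interior of~$J_j$ or at the endpoint $a_j + t_j$), on whether~$i^*$ lies in~$J_j$, and on whether $\sigma^-(C) = \ell$ or $\ell - 1$, in order to manufacture the desired edge between~$w$ and some vertex of $W \cap [a_j, a_{j+1}-1]$ by hand.
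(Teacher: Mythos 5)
Your outline correctly reduces the claim to the case $j\in[2,q^*]$ (where $m_j=1$), and correctly identifies both the propagation step via Claim~\ref{claim:pattern}\ref{itm:pattern3} along $[a_j+t_j+1,a_{j+1}-1]\subseteq W$ and the need to first manufacture a base edge $v_s w\in E(G)$. But you stop exactly at the hard step, and the strategy you gesture at (inserting $w$ between consecutive $W$-vertices of $C$ and invoking the no-good-Hamilton-cycle assumption) does not obviously produce the orientation $v_s w$: inserting $w$ across a forward edge of $C$ only rules out the simultaneous pair $v_i w, w v_{i+1}\in E(G)$, which is not enough to pin down either orientation, and there is no visible route to a contradiction from there. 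So as written this is a genuine gap, not merely an omitted routine computation.

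The paper resolves this step by a short internal induction on $j$, which you did not consider. Since $m_1=0$, the case $j=1$ is Claim~\ref{clm:m_j}\ref{itm:m_j5}. For $j\ge2$, the induction hypothesis supplies $s^*_{j-1}$ with $v_{s^*_{j-1}}w\in E(G)$ and a directed subpath up to $v_{a_j-1}$; concatenating with the forward edge $v_{a_j-1}v_{a_j}$ and the backward run inside $J_j$ gives a path $P'=v_{s^*_{j-1}}\dots v_{s_j}$ with $\sigma^-(P')=e(P')-1$, where $s_j:=\min(W\cap[a_j,a_j+t_j])$ (nonempty since $|W\cap[a_j,a_j+t_j]|\ge 2-m_j=1$). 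Claim~\ref{claim:pattern}\ref{itm:pattern3} then yields $v_{s_j}w\in E(G)$ in one stroke. From this the paper deduces, rather than case-splits on, that $i^*\notin[a_j,a_j+t_j]$ and $|W\cap[a_j,a_j+t_j]|=1$ (a second $W$-index in $[a_j,a_j+t_j]$ would force $wv_{s_j}\in E(G)$ via Claim~\ref{claim:pattern}\ref{itm:pattern2}, contradicting orientedness). This also repairs a secondary issue in your plan: in your $|W\cap[a_j,a_j+t_j]|\ge2$ branch, the count under $m_j=1$ gives $|[a_j+t_j+1,a_{j+1}-1]\setminus W|=1$, so the propagation to $a_{j+1}-1$ is unavailable and the ``moreover'' part of the claim would fail for your choice of $s^*_j$; the paper sidesteps this by showing that branch cannot occur.
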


\begin{proofclaim}
We proceed by induction on~$j$. 
Recall that $m_1=0$, so the case $j=1$ follows immediately by Claim~\ref{clm:m_j}\ref{itm:m_j5}. 
Let $j \in [q^*]\setminus\{1\}$. 
We have $m_j=1$, hence $| W \cap [a_j,a_j+t_j] | \ge 2 -m_j = 1$ by~\eqref{eqn:m_j}. 

Let $s'_j = \min ( W \cap [a_j,a_j+t_j])$. 
We can assume that $s^*_{j-1}$ exists by induction hypothesis. 
In particular, $v_{s^*_{j-1}} w \in E(G)$ and the path $P'=v_{s^*_{j-1}} v_{s^*_{j-1}+1} \dots v_{a_{j}-1} v_{a_j}  \dots v_{s'_{j}}$ satisfies $\sigma^-(P') = e(P') -1$ as $v_{a_{j}-1} v_{a_j}$ is the only forward edge in~$P'$. 
By Claim~\ref{claim:pattern}\ref{itm:pattern3} (with $i = s^*_{j-1}$ and $j = s'_j$), we have 
$v_{s'_j}w \in E(G)$.

If $| W \cap [a_j,a_j+t_j] | \ge 2$, then Claim~\ref{clm:2} implies that $r_j = s'_j$ and $wv_{s'_j} \in E(G)$, a contradiction.
Therefore, we may assume that $ | W \cap [a_j,a_j+t_j] | = 1$.
This also implies that $i^* \notin [a_j,a_j+t_j-1]$ or else $i^*+1 \in W$ implies that $|[a_j, a_{j}+t_j] \cap W| \ge 2$.  
Therefore $v_{a_j+t_j} v_{a_j+t_j-1} \dots v_{s'_{j}}$ is a directed path. 

If $a_j + t_j = a_{j+1} - 1$, then set $s_j^*= s'_j$ and the claim  holds. 
Suppose instead that $a_j + t_j < a_{j+1} - 1$.
Since $ | W \cap [a_j,a_j+t_j] | = 1$ and equality holds in~\eqref{eqn:m_j}, we have $[a_j+t_j+1, a_{j+1}-1] \subseteq W$. 
Note that $\sigma^-(v_{s'_{j}}v_{s'_{j}+1} \dots v_{a_j+t_j+1}) = e(v_{s'_{j}}v_{s'_{j}+1} \dots v_{a_j+t_j+1}) -1 $. 
Since $v_{s'_j}w\in E(G)$, Claim~\ref{claim:pattern}\ref{itm:pattern3} (with $i = s'_j$ and $j = a_j+t_j+1$) implies that $v_{a_j+t_j+1} w  \in E(G)$. 
For each $i \in [a_j+t_j+1, a_{j+1}-2]$ in turns, given $v_i w  \in E(G)$, we deduce $v_{i+1}w\in E(G)$ by Claim~\ref{claim:pattern}\ref{itm:pattern3} (with $j =i+1$)\footnote{Or by considering the Hamilton cycle~$C'$ obtained from~$C$ by inserting $w$ in between $v_i$ and $v_{i+1}$, that is, $C' = v_1 \dots v_i w v_{i+1} \dots v_{n-1}1$. Note that $\sigma(C') \le \sigma(C)+ \sigma^-(v_i w v_{i+1})$.}.
In particular, $v_{a_{j+1}-1}w \in E(G)$, so set $s^*_j = a_{j+1}-1$ and the claim  holds. 
\end{proofclaim}
 
Recall that $m_{q^*+1} = 0$. 
Let $r^*_{q^*+1}$ and $s^*_{q^*}$ be obtained from Claim~\ref{clm:m_j}\ref{itm:m_j4} and  Claim~\ref{clm:q^*}, respectively. 
Note that we have $w v_{r^*_{q^*+1}},v_{s^*_{q^*}}w  \in E(G)$ and the path $P = v_{s^*_{q^*}} v_{s^*_{q^*}+1} \dots v_{r^*_{q^*+1}}$ satisfies $\sigma^-(P) = e(P) -1$.
This contradicts Claim~\ref{claim:pattern}\ref{itm:pattern3} (with $i = s^*_{q^*}$ and $j = r^*_{q^*+1})$.
\end{proof}

\section{Concluding remarks}\label{sec:conclusion}
In this paper, we consider the oriented discrepancy problem for Hamilton cycles. A natural research direction is to study oriented discrepancy problems for different conditions and structures. For example, it would be interesting to prove an analogue of Theorem~\ref{thm:main} with an Ore-type condition. 
It seems reasonable to consider other structures that have a natural notion of direction, such as powers of Hamilton cycles.

Given a graph $H$, one can assign an arbitrary orientation to~$H$. We then say that a copy of~$H$ in~$G$ has {\it large oriented discrepancy} if significantly more than half of its edges agree (or disagree) with the initial orientation. It would be interesting to determine which edge-orientation of $H$ minimises (or maximises) the minimum degree threshold required to ensure a host graph~$G$ contains a copy of~$H$ with a certain amount of discrepancy.

\section*{Acknowledgments}
The authors are grateful to Andrew Treglown and Sim\'on Piga for the fruitful discussions and comments and thank the referees for their detailed and helpful remarks.

\end{document}